\newcommand{\versiondate}{January 14, 2020}
\newcommand{\vect}[1]{\ensuremath{\mathbf{#1}}}
\newcommand{\card}[1]{\ensuremath{\lvert{#1}\rvert}}
\newcommand{\nset}[1]{\ensuremath{[{#1}]}}
\newcommand{\nocc}[2]{\ensuremath{\card{{#2}}_{#1}}}
\newcommand{\preserves}{\ensuremath{\vartriangleright}}
\newcommand{\IN}{\ensuremath{\mathbb{N}}}
\newcommand{\esssorts}[2][S]{\ensuremath{{#1}_{#2}}}
\newcommand{\mcf}[1]{\ensuremath{\langle{#1}\rangle}}
\newcommand{\mcr}[1]{\ensuremath{[{#1}]}}
\newcommand{\lift}[2][k]{\ensuremath{\sharp_{#1}\ifthenelse{\equal{#2}{}}{}{{#2}}}}
\newcommand{\flatten}[2][k]{\ensuremath{\flat_{#1}\ifthenelse{\equal{#2}{}}{}{{#2}}}}
\DeclareMathOperator{\range}{Im}
\DeclareMathOperator{\declaration}{dec}
\DeclareMathOperator{\arity}{ar}
\DeclareMathOperator{\mInv}{mInv}
\DeclareMathOperator{\mPol}{mPol}
\DeclareMathOperator{\Inv}{Inv}
\DeclareMathOperator{\Pol}{Pol}
\DeclareMathOperator{\Mod}{Mod}
\DeclareMathOperator{\mId}{mId}
\DeclareMathOperator{\pr}{pr}
\DeclareMathOperator{\Op}{Op}
\DeclareMathOperator{\Relp}{Relp}
\DeclareMathOperator{\Alg}{Alg}
\DeclareMathOperator{\Relax}{Relax}
\DeclareMathOperator{\sE}{\mathsf{E}}
\DeclareMathOperator{\sR}{\mathsf{R}}
\DeclareMathOperator{\sH}{\mathsf{H}}
\DeclareMathOperator{\sS}{\mathsf{S}}
\DeclareMathOperator{\sP}{\mathsf{P}}
\DeclareMathOperator{\sPfin}{\mathsf{P}_\mathrm{fin}}
\newcommand{\bb}{\mathbf{b}}
\newcommand{\bc}{\mathbf{c}}
\newcommand{\bd}{\mathbf{d}}
\newcommand{\emptyword}{\ensuremath{\varepsilon}}
\theoremstyle{plain}
\newtheorem{theorem}{Theorem}[section]
\newtheorem{proposition}[theorem]{Proposition}
\newtheorem{lemma}[theorem]{Lemma}
\theoremstyle{definition}
\newtheorem{definition}[theorem]{Definition}
\newtheorem{example}[theorem]{Example}
\newtheorem{remark}[theorem]{Remark}
\title{Reflections and powers of multisorted minions}
\author{%
    Erkko Lehtonen\,\textsuperscript{1}, Reinhard P\"oschel\,\textsuperscript{2} \\[1ex]
    \small{\textsuperscript{1, 2} Technische Universit\"at Dresden,
    Institut f\"ur Algebra,} \\
    \small{01062 Dresden,
    Germany} \\[0.75ex]
    \small{\textsuperscript{1} Centro de Matem\'atica e Aplica\c{c}\~oes,
    Faculdade de Ci\^encias e Tecnologia,} \\
    \small{Universidade Nova de Lisboa,
    Quinta da Torre,
    2829-516 Caparica,
    Portugal}}
\date{\versiondate}
\begin{document}
\maketitle

\begin{abstract}
\noindent
Classes of multisorted minions closed under extensions, reflections, and direct powers are considered from a relational point of view. As a generalization of a result of Barto, Opr\v{s}al, and Pinsker, the closure of a multisorted minion is characterized in terms of constructions on multisorted relation pairs which are invariant for minions.
\end{abstract}




\section{Introduction}

In the famous ``wonderland paper'' by \textsc{L. Barto}, \textsc{J. Opr\v{s}al} and \textsc{M. Pinsker} (the first version appeared in 2015), a new algebraic notion saw the light of day: the \emph{reflection} of an algebra (applicable also to a single function) \cite[Definition~4.1]{wonderland}.
This notion was introduced for the primary purpose of investigating the computational complexity of constraint satisfaction problems (CSPs).

From the algebraic point of view, reflections generalize at the same time both subalgebras and homomorphic images; however, they no longer preserve arbitrary identities but only so\hyp{}called h1\hyp{}identities, nowadays also known as \emph{minor identities.}
Furthermore, for the operations of an algebra, instead of clones, a weaker notion had to be considered where composition of functions is no longer required: the so\hyp{}called \emph{minions} (also called minor\hyp{}closed classes or clonoids \cite{AicMay,CouFol,Pippenger,Sparks}).
The well\hyp{}known Galois connection $\Pol$--$\Inv$ (for clones and relational clones, induced by the property of preservation of a relation by a function) does not work well for describing these minions as Galois closures.
It turned out that the Galois theory introduced by \textsc{N. Pippenger} \cite{Pippenger} for minor\hyp{}closed sets of functions provides the right tool for minions: instead of (invariant) relations one has to consider pairs of relations.

Already Pippenger dealt with functions $f \colon A^n \to B$ between two sets (an approach which was later also used for so\hyp{}called promise constraint satisfaction problems, PCSPs \cite{BraGur}).
Therefore it was natural to ask if and how all these notions work for \emph{multisorted algebras} (i.e., algebras with several base sets) in general, in order to provide a systematic algebraic treatment of everything that pertains or might pertain to reflections.
This was done in \cite{LehPoeWal-R1} where Birkhoff\hyp{}like theorems were established for multisorted algebras and minor identities, and in \cite{LehPoeWal-R2} where the Galois connection $\Pol$--$\Inv$ is generalized to the multisorted case, yielding $\mPol$--$\mInv$ with minions as Galois closed sets.

In the present paper we use results from \cite{LehPoeWal-R1} and \cite{LehPoeWal-R2} in order to treat one of the main results of the wonderland paper, namely, the characterization of the ${\sE}{\sR}{\sP}$\hyp{}closure \cite[Theorem~1.3]{wonderland}, also for multisorted algebras.
In particular, we ask how this ${\sE}{\sR}{\sP}$\hyp{}closure can be characterized from the relational point of view (i.e., by constructions on invariant relation pairs of minions).
To this end, we introduce reflections (and coreflections) also for relation pairs (see Section~\ref{reflection-coreflection}), and these new concepts turn out to provide a suitable tool for enlightening reflections from the relational point of view.

The paper is organized as follows.
In Section~\ref{sec:preliminaries},
we review basic concepts and well\hyp{}known results related to multisorted operations and algebras, minors, reflections, and relation pairs that will be needed in the subsequent sections.
In Section~\ref{sec:refl-corefl},
we define reflections and coreflections as well as liftings and flattenings of relation pairs and establish a few auxiliary results.

The main results are given in Section~\ref{sec:results}.
At first, we define the operators $\sE$, $\sR$, $\sP$ (extensions, reflections, direct powers) and for each of them we show how they can be characterized via invariant relation pairs (Propositions \ref{prop:E}, \ref{prop:R}, \ref{prop:P}).
Combining the above, we then characterize the ${\sR}{\sP}$-closure in Theorem~\ref{thm:main-RP}.
We would like to underline that this result is of interest on its own right if one wants to consider only algebras of a fixed type.
Finally, with Theorem \ref{thm:main-ERP} we put forward the multisorted counterpart of the wonderland theorem~\cite[Theorem~1.3]{wonderland}; this also includes the characterization via relational constructions (cf.~Theorem~\ref{thm:main-ERP}\ref{main-ERP:fin}\ref{main-ERP:mcc}).

We would like to mention at this point that usual (one\hyp{}sorted) algebras as well as clones are special multisorted algebras and special minions.
Therefore all results of this paper can be (re)interpreted and used for the classical case, too.
It is possible to characterize clones $C$ as minions with special properties for its invariant relation pairs $\mInv C$ (and $\Inv C$ can be defined from $\mInv C$).
This is, however, not in the main focus of the current paper and therefore will not be considered and proved here.


\section{Preliminaries}
\label{sec:preliminaries}
\subsection*{Multisorted operations}

We start by briefly recalling basic concepts in the theory of multisorted sets and multisorted operations.
We follow the presentation of \cite{LehPoeWal-R1,LehPoeWal-R2}, which is largely based on the terminology used in the book by Wechler~\cite{Wechler}, and we refer the reader to those publications for precise definitions, further details, and explanations not given here.

We denote by $\IN$ and $\IN_{+}$ the set of nonnegative integers and the set of positive integers, respectively.
For $n \in \IN$, let $\nset{n} := \{1, \dots, n\}$.

We write $W(S)$ for the set of all words over a set $S$ is denoted by $W(S)$; this includes the empty word $\emptyword$.
We denote by $\card{w}$ the length of a word $w \in W(S)$ and by $\nocc{s}{w}$ the number of occurrences of a letter $s \in S$ in $w$.

Let $S$ be a set of elements called \emph{sorts.}
An $S$\hyp{}indexed family of sets is called an \emph{$S$\hyp{}sorted set.}
Let $A = (A_s)_{s \in S}$ and $B = (B_s)_{s \in S}$ be $S$\hyp{}sorted sets.
We say that $A$ is an \emph{\textup{(}$S$\hyp{}sorted\textup{)} subset} of $B$ and we write $A \subseteq B$ if $A_s \subseteq B_s$ for all $s \in S$.
The \emph{union,} \emph{intersection,} and \emph{direct product} of $S$\hyp{}sorted sets $A$ and $B$, as well as \emph{powers} of $A$ are defined componentwise: $A \cup B := (A_s \cup B_s)_{s \in S}$, $A \cap B := (A_s \cap B_s)_{s \in S}$, and $A \times B := (A_s \times B_s)_{s \in S}$, $A^k := (A_s^k)_{s \in S}$ for any $k \in \IN$.
Let $\esssorts{A} := \{s \in S \mid A_s \neq \emptyset\}$ be the set of \emph{essential sorts} of $A$.

For a word $w = s_1 s_2 \dots s_n \in W(S)$, let $A_w := A_{s_1} \times A_{s_2} \times \dots \times A_{s_n}$.
Note that $A_\emptyword = \{\emptyset\}$.
A pair $(w,s) \in W(S) \times S$ is called a \emph{declaration} over $S$.
A declaration $(w,s)$ with $w = s_1 \dots s_n$ is \emph{reasonable} in $A$ if $A_s = \emptyset$ implies $A_{s_i} = \emptyset$ for some $i$, or, equivalently, if $A_w \neq \emptyset$ implies $A_s \neq \emptyset$.
Note that the declaration $(\emptyword,s)$ is reasonable in $A$ if and only if $A_s \neq \emptyset$.

Any function of the form $f \colon A_w \to A_s$ is called an \emph{$S$\hyp{}sorted operation} on $A$; the pair $(w,s)$ is necessarily a reasonable declaration in $A$.
The pair $(w,s)$ is called the \emph{declaration} of $f$, the word $w$ is called the \emph{arity} of $f$, and the element $s$ is called the \emph{\textup{(}output\textup{)} sort} of $f$.
The elements of $S$ occurring in the word $w$ are called the \emph{input sorts} of $f$.
We denote the declaration and arity of
$f$ by $\declaration(f)$ and $\arity(f)$, respectively.

We denote the set of all $S$\hyp{}sorted operations of declaration $(w,s)$ on $A$ by $\Op^{(w,s)}(A)$.
Let $\Op(A)$ be the set of all $S$\hyp{}sorted operations on $A$, i.e.,
\[
\Op(A) := \bigcup \{ \Op^{(w,s)}(A) \mid (w,s) \in W(S) \times S \}.
\]

An \emph{$S$\hyp{}sorted mapping} $f$ from $A$ to $B$, denoted by $f \colon A \to B$, is a family $(f_s)_{s \in S}$ of maps $f_s \colon A_s \to B_s$.

\subsection*{Minors, reflections, powers}

\begin{definition}\label{def:minor}
Given an $S$\hyp{}sorted operation $f \colon A_w \to A_s$ with $w = s_1 \dots s_n$, a word $u = u_1 \dots u_m \in W(S)$ such that $\{s_1, \dots, s_n\} \subseteq \{u_1, \dots, u_m\}$ and a map $\sigma \colon \nset{n} \to \nset{m}$ satisfying $s_i = u_{\sigma(i)}$ for all $i \in \nset{n}$ (sort compatibility), define the function $f^u_\sigma \colon A_u \to A_s$ of declaration $(u,s)$ by the rule $f^u_\sigma(\vect{a}) := f(\vect{a} \sigma)$, for all $\vect{a} \in A_u$, i.e.,
\[
f^u_\sigma(a_1, \dots, a_m) :=
f(a_{\sigma(1)}, \dots, a_{\sigma(n)})
\]
for all $a_i \in A_{u_i}$ ($1 \leq i \leq m$).
Such a function $f^u_\sigma$ is called a \emph{minor} of $f$.
A set $F \subseteq \Op(A)$ is \emph{minor\hyp{}closed} or a \emph{minion} if it contains all minors of its members.
For $F \subseteq \Op(A)$, we denote by $\mcf{F}$ the minion generated by $F$, i.e., the smallest minion containing $F$.
\end{definition}

\begin{definition}
\label{def:reflection}
Let $A$ and $B$ be $S$\hyp{}sorted sets.
A \emph{reflection} of $A$ into $B$ is a pair $(h,h')$ of $\esssorts{B}$\hyp{}sorted mappings $h = (h_s)_{s \in \esssorts{B}}$, $h' = (h'_s)_{s \in \esssorts{B}}$, $h_s \colon B_s \to A_s$, $h'_s \colon A_s \to B_s$.
Reflections of $A$ into $B$ exist if and only if $\esssorts{B} \subseteq \esssorts{A}$.

Assume that $\esssorts{B} \subseteq \esssorts{A}$ and $(h,h')$ is a reflection of $A$ into $B$.
If $(w,s) \in W(S) \times S$ is a declaration that is reasonable in both $A$ and $B$ and $f \colon A_w \to A_s$, then we can define the \emph{$(h,h')$\hyp{}reflection} of $f$ as the map $f_{(h,h')} \colon B_w \to B_s$ that is the empty map if $B_w = \emptyset$ and is otherwise given by the rule
\[
f_{(h,h')}(b_1, \dots, b_n) := h'_s ( f ( h_{s_1} ( b_1 ), \dots, h_{s_n} ( b_n ) ) )
\]
for all $(b_1, \dots, b_n) \in B_w$,\hspace{-0.6pt} which we may write briefly as $f_{(h,h')}(\vect{b}) = h'_s(f(h_w(\vect{b})))$ for $\vect{b} = (b_1, \dots, b_n) \in B_w$.
We say that an $S$\hyp{}sorted operation $g$ is a \emph{reflection} of $f$ if $g$ is an $(h,h')$\hyp{}reflection of $f$ for some reflection $(h,h')$.

Let $F \subseteq \Op(A)$.
If $\declaration(f)$ is reasonable in $B$ for all $f \in F$, then the \emph{$(h,h')$\hyp{}reflection} of $F$ is defined as $F_{(h,h')} := \{f_{(h,h')} \mid f \in F\}$.
Sets of operations of the form $F_{(h,h')}$ for some reflection $(h,h')$ are called \emph{reflections} of $F$.
\end{definition}

\begin{definition}
Let $f \in \Op(A)$ and $I$ an arbitrary index set.
Define $f^{\otimes I} \in \Op(A^I)$ by componentwise application of $f$ to indexed families, i.e.,
for all $(a^j_i)_{i \in I} \in A^I$ ($1 \leq j \leq n$),
\[
f^{\otimes I}((a^1_i)_{i \in I}, \dots, (a^n_i)_{i \in I})
:=
(f(a^1_i, a^2_i, \dots, a^n_i))_{i \in I}.
\]
We refer to $f^{\otimes I}$ as a \emph{direct power} of $f$.
For $F \subseteq \Op(A)$, let $F^{\otimes I} := \{f^{\otimes I} \mid f \in F\}$.
If the index set $I$ is finite, then $f^{\otimes I}$ is called a \emph{finite direct power} of $f$.
Whenever $I = \nset{k}$ for some $k \in \IN$,
we may write simply $f^{\otimes k}$ and $F^{\otimes k}$ for $f^{\otimes \nset{k}}$ and $F^{\otimes \nset{k}}$.
\end{definition}

The following two propositions show that reflections and direct powers of minions are minions.

\begin{proposition}[{\cite[Proposition~5.2]{LehPoeWal-R2}}]
\label{prop:refl-mc}
Let $A$ and $B$ be $S$\hyp{}sorted sets.
Let $F \subseteq \Op(A)$, and let $(h,h')$ be a reflection of $A$ into $B$ such that $F_{(h,h')}$ is defined.
If $F$ is a minion, then $F_{(h,h')}$ is a minion.
\end{proposition}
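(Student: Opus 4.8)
The plan is to verify directly that $F_{(h,h')}$ contains all minors of its members, the crux being that forming the $(h,h')$\hyp{}reflection commutes with forming minors. First I would note that every element of $F_{(h,h')}$ is of the form $g = f_{(h,h')}$ with $f \in F$, and that $g$ has the same declaration $(w,s)$ as $f$, this declaration being reasonable in $B$ because $F_{(h,h')}$ is assumed to be defined. Writing $w = s_1 \dots s_n$, an arbitrary minor of $g$ is some $g^u_\sigma$ with $u = u_1 \dots u_m \in W(S)$, $\{s_1,\dots,s_n\} \subseteq \{u_1,\dots,u_m\}$, and $\sigma \colon \nset{n} \to \nset{m}$ satisfying $u_{\sigma(i)} = s_i$ for all $i \in \nset{n}$. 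Since all these data depend only on the declaration $(w,s)$, the same $u$ and $\sigma$ produce a minor $f^u_\sigma$ of $f$, and $f^u_\sigma \in F$ because $F$ is a minion.

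Next I would check that $(f^u_\sigma)_{(h,h')}$ is defined, i.e.\ that its declaration $(u,s)$ is reasonable in $B$: if $B_u \neq \emptyset$ then each $B_{u_j} \neq \emptyset$, hence $B_{s_i} \neq \emptyset$ for all $i$ (the sorts $s_i$ all occur in $u$), so $B_w \neq \emptyset$ and therefore $B_s \neq \emptyset$ by reasonableness of $(w,s)$ in $B$; if $B_u = \emptyset$ then $(u,s)$ is reasonable trivially. The core of the argument is then the identity $(f^u_\sigma)_{(h,h')} = (f_{(h,h')})^u_\sigma$, which I would prove by a routine pointwise computation. If $B_u = \emptyset$ both sides are the empty map; otherwise, for $\vect{b} = (b_1, \dots, b_m) \in B_u$,
\[
(f^u_\sigma)_{(h,h')}(\vect{b})
 = h'_s\bigl(f^u_\sigma(h_{u_1}(b_1), \dots, h_{u_m}(b_m))\bigr)
 = h'_s\bigl(f(h_{u_{\sigma(1)}}(b_{\sigma(1)}), \dots, h_{u_{\sigma(n)}}(b_{\sigma(n)}))\bigr),
\]
and substituting $u_{\sigma(i)} = s_i$ turns the right\hyp{}hand side into $h'_s(f(h_{s_1}(b_{\sigma(1)}), \dots, h_{s_n}(b_{\sigma(n)}))) = f_{(h,h')}(b_{\sigma(1)}, \dots, b_{\sigma(n)}) = (f_{(h,h')})^u_\sigma(\vect{b})$. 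Hence $g^u_\sigma = (f_{(h,h')})^u_\sigma = (f^u_\sigma)_{(h,h')} \in F_{(h,h')}$, so $F_{(h,h')}$ is a minion.

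I do not expect a genuine obstacle here; the argument is essentially bookkeeping. The one place needing a little care is matching the sort\hyp{}compatibility condition $u_{\sigma(i)} = s_i$ with the sort\hyp{}indexing of the reflection maps, so as to use $h_{u_{\sigma(i)}} = h_{s_i}$ (and to note that $h'_s$ is available, since $s \in \esssorts{B}$ whenever $B_u \neq \emptyset$), together with keeping track of the degenerate cases where the relevant sorted sets are empty and the operations in question are empty maps.
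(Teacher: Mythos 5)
Your proof is correct: the commuting identity $(f^u_\sigma)_{(h,h')} = (f_{(h,h')})^u_\sigma$ is exactly the right crux, and you handle the sort\hyp{}compatibility bookkeeping and the empty\hyp{}map degenerate cases properly (note that reasonableness of $(u,s)$ in $B$ also follows at once from $f^u_\sigma \in F$ and the hypothesis that $F_{(h,h')}$ is defined, so your separate verification, while fine, is not strictly needed). The paper itself only cites this result from \cite[Proposition~5.2]{LehPoeWal-R2} without reproving it, but your argument follows the same template the paper uses for the analogous Proposition~\ref{prop:power-mc} on direct powers, so it is essentially the standard proof.
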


\begin{proposition}
\label{prop:power-mc}
Let $A$ be an $S$\hyp{}sorted set, and let $F \subseteq \Op(A)$ be a minion.
Then for any $k \in \IN$, $F^{\otimes k}$ is a minion.
\end{proposition}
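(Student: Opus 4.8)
The plan is to show directly that $F^{\otimes k}$, which by definition is a subset of $\Op(A^{\nset{k}})$, is closed under taking minors. The key observation, which will be verified by a routine computation, is that forming a minor commutes with taking the $k$\hyp{}th direct power: for every $f \in \Op(A)$ with $\declaration(f) = (w,s)$, $w = s_1 \dots s_n$, and every choice of $u = u_1 \dots u_m \in W(S)$ and $\sigma \colon \nset{n} \to \nset{m}$ as in Definition~\ref{def:minor}, one has
\[
(f^{\otimes k})^u_\sigma = (f^u_\sigma)^{\otimes k}.
\]

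First I would record that the relevant declarations behave correctly, so that both sides above are operations of the same declaration $(u,s)$ on $A^{\nset{k}}$. Since $(w,s)$ is reasonable in $A$ and $\{s_1, \dots, s_n\} \subseteq \{u_1, \dots, u_m\}$, the declaration $(u,s)$ is reasonable in $A$, so $f^u_\sigma \in \Op(A)$ is well defined; and because $(A^{\nset{k}})_t = A_t^k$ for each sort $t$, reasonableness of $(u,s)$ in $A^{\nset{k}}$ agrees with reasonableness in $A$ when $k \geq 1$, while for $k = 0$ every sort of $A^{\nset{0}}$ is essential and reasonableness is automatic. Then I would check the identity pointwise. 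An element of $(A^{\nset{k}})_u$ is a tuple $\vect{a} = (a_1, \dots, a_m)$ with $a_i = (a_i^j)_{j \in \nset{k}} \in A_{u_i}^k$; unravelling the definitions of minor and of direct power gives
\[
(f^{\otimes k})^u_\sigma(\vect{a}) = f^{\otimes k}(a_{\sigma(1)}, \dots, a_{\sigma(n)}) = \bigl( f(a_{\sigma(1)}^j, \dots, a_{\sigma(n)}^j) \bigr)_{j \in \nset{k}},
\]
and on the other hand
\[
(f^u_\sigma)^{\otimes k}(\vect{a}) = \bigl( f^u_\sigma(a_1^j, \dots, a_m^j) \bigr)_{j \in \nset{k}} = \bigl( f(a_{\sigma(1)}^j, \dots, a_{\sigma(n)}^j) \bigr)_{j \in \nset{k}},
\]
so the two coincide.

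With the commutation identity in hand, the proof concludes quickly: assuming $F$ is a minion, let $g$ be a minor of an arbitrary member $f^{\otimes k}$ of $F^{\otimes k}$, say $g = (f^{\otimes k})^u_\sigma$. Then $g = (f^u_\sigma)^{\otimes k}$ by the identity above, and $f^u_\sigma$ is a minor of $f \in F$, hence $f^u_\sigma \in F$ because $F$ is minor\hyp{}closed; therefore $g \in F^{\otimes k}$. Thus $F^{\otimes k}$ contains all minors of its members, i.e., it is a minion. I do not expect a genuine obstacle here: the only thing requiring care is keeping the two layers of indexing — the arity positions and the power index set $\nset{k}$ — separate, and all the content is concentrated in the commutation identity, which is immediate once the definitions are written out.
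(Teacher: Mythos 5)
Your proposal is correct and follows exactly the paper's argument: the entire content is the commutation identity $(f^{\otimes k})^u_\sigma = (f^u_\sigma)^{\otimes k}$, which the paper states as a straightforward verification and you spell out pointwise, and the conclusion that $F^{\otimes k}$ is minor\hyp{}closed then follows in the same way. No gaps.
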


\begin{proof}
Let $g \in F^{\otimes k}$ with $\declaration(g) = (w,s)$, $w = s_1 \dots s_n$.
Then $g = f^{\otimes k}$ for some $f \in F$ with $\declaration(f) = (w,s)$.
It is straightforward to verify that for all $u = u_1 \dots u_m \in W(S)$ and $\sigma \colon \nset{n} \to \nset{m}$ such that $f_\sigma^u$ is defined,
it holds that $(f^{\otimes k})^u_\sigma = (f^u_\sigma)^{\otimes k}$.
Thus $F^{\otimes k}$ is minor\hyp{}closed if $F$ is minor\hyp{}closed.
\end{proof}

\subsection*{Multisorted algebras and identities}

A (\emph{multisorted similarity}) \emph{type} is a triple $\tau = (S, \Sigma, \declaration)$, where $S$ is a set of sorts, $\Sigma$ is a set of \emph{function symbols,} and $\declaration \colon \Sigma \to W(S) \times S$ is a mapping.
If $f \in \Sigma$ and $\declaration(f) = (w,s)$, we say that $f$ has \emph{arity} $w$ and \emph{sort} $s$.
A (\emph{multisorted}) \emph{algebra of type $\tau$} is a system $\mathbf{A} = (A; \Sigma^\mathbf{A})$, where $A$ is an $S$\hyp{}sorted set, called the \emph{carrier} (or the \emph{universe}) of $\mathbf{A}$, and $\Sigma^\mathbf{A} = (f^\mathbf{A})_{f \in \Sigma}$ is a family of $S$\hyp{}sorted operations on $A$, each $f^\mathbf{A}$ of declaration $\declaration(f)$.
Denote by $\Alg(\tau)$ the class of all multisorted algebras of type $\tau$.

Homomorphisms, subalgebras, and direct products of multisorted algebras are defined in the expected way.
Let $\mathbf{A} = (A, \Sigma^\mathbf{A})$ and $\mathbf{B} = (B, \Sigma^\mathbf{B})$ be multisorted algebras of type $\tau = (S, \Sigma, \declaration)$.
For a reflection $(h,h')$ of $A$ into $B$, the algebra $\mathbf{B}$ is called the \emph{$(h,h')$\hyp{}reflection} of $\mathbf{A}$ if $f^\mathbf{B} = (f^\mathbf{A})_{(h,h')}$ for all $f \in \Sigma$.
The algebra $\mathbf{B}$ is a \emph{reflection} of $\mathbf{A}$ if $\mathbf{B}$ is an $(h,h')$\hyp{}reflection of $\mathbf{A}$ for some reflection $(h,h')$ of $A$ into $B$.

Let $\mathcal{K}$ be a class of multisorted algebras of a fixed type.
Denote by $\sS \mathcal{K}$, $\sH \mathcal{K}$, $\sP \mathcal{K}$, and $\sR \mathcal{K}$ the class of all subalgebras, homomorphic images, direct products, and reflections of members of $\mathcal{K}$, respectively.

Terms can be defined in the setting of multisorted algebras much in the same way as in the classical one\hyp{}sorted case.
One has to be a bit more careful in defining identities.
In contrast to the one\hyp{}sorted case, it is not sufficient to define an identity to be a pair of terms; one also has to specify the variables that are to be valuated when one decides whether an identity holds in an algebra.
With this in mind, the definition of a multisorted algebra satisfying an identity can be laid out in the expected way.
For further details, see \cite[Section~3]{LehPoeWal-R1}.

We consider terms of type $\tau = (S, \Sigma, \declaration)$ over the \emph{standard set of variables} $X = (X_s)_{s \in S}$ with $X_s := \{x^s_i \mid i \in \IN_{+}\}$.
A term is called a \emph{minor term} if it contains exactly one occurrence of a function symbol.
Thus a minor term is of the form $f \sigma(1) \dots \sigma(n)$ where $f \in \Sigma$ with $\declaration(f) = (w, s)$, $w = s_1 \dots s_n$, and $\sigma \colon \nset{n} \to X$ is a map satisfying $\sigma(i) \in X_{s_i}$ for all $i \in \nset{n}$; this term will be denoted by $f_\sigma$.
A word $u = u_1 \dots u_m \in W(S)$ is a \emph{feasible arity for $f_\sigma$} if for every $i \in \nset{n}$ it holds that if $\sigma(i) = x^{s_i}_j$ then $\nocc{s_i}{u} \geq j$.
If $u$ is a feasible arity for $f_\sigma$, then we can define the \emph{term operation} of arity $u$ induced by $f_\sigma$ on $\mathbf{A}$, denoted by $(f_\sigma^u)^\mathbf{A}$ and defined by $(f_\sigma^u)^\mathbf{A} \colon A_u \to A_s$,
$(f_\sigma^u)^\mathbf{A}(a_1, \dots, a_m) := f^\mathbf{A}(a_{\nu(1)}, \dots, a_{\nu(n)})$,
where $\nu(i) = \ell$ if and only if $\sigma(i) = x^{s_i}_j$ and $\ell$ is the position of the $j$\hyp{}th occurrence of $s_i$ in $u$.

\begin{example}
In order to illustrate the notions introduced above,
consider the multisorted similarity type $\tau = (S, \Sigma, {\declaration})$, where $S = \{1,2,3\}$, $\Sigma = \{f\}$, $\declaration(f) = (12321,2)$.
Then $f x^1_3 x^2_2 x^3_4 x^2_2 x^1_2$ is a minor term of type $\tau$ over the standard set $X$ of variables.
We can write this term as $f_\sigma$ with $\sigma \colon \nset{5} \to X$, $1 \mapsto x^1_3$, $2 \mapsto x^2_2$, $3 \mapsto x^3_4$, $4 \mapsto x^2_2$, $5 \mapsto x^1_2$.
The word $u = 333221333221333221 \in W(S)$ is a feasible arity for $f_\sigma$ (because $u$ has at least 3 occurrences of $1$, at least 2 occurrences of $2$, at least 4 occurrences of $3$, and so on).
Given an algebra $\mathbf{A} = (A; \Sigma^\mathbf{A})$ of type $\tau$ (that is, $\Sigma^\mathbf{A} = \{f^\mathbf{A}\}$ and $f^\mathbf{A}$ is an operation of declaration $(12321,2)$ on $A$), the term $f_\sigma$ induces the term operation $(f_\sigma^u)^\mathbf{A}$ of arity $u$ on $\mathbf{A}$, defined by the rule
$(f_\sigma^u)^\mathbf{A}(a_1, a_2, \dots, a_{18}) := f^\mathbf{A}(a_{18}, a_5, a_7, a_5, a_{12})$
(because the 3rd occurrence of $1$ lies at the 18th position in $u$, the 2nd occurrence of $2$ at the 5th position, and so on).
\end{example}

An \emph{identity} is a triple $(S', t_1, t_2)$, usually written as $t_1 \approx_{S'} t_2$, where $t_1$ and $t_2$ are terms of type $\tau$ over $X$ and $S' \subseteq S$ such that the sorts of the variables occurring in the two terms belong to the set $S'$.
An algebra $\mathbf{A}$ \emph{satisfies} the identity $t_1 \approx_{S'} t_2$ if the terms $t_1$ and $t_2$ take the same value under every valuation of variables from $X_s$, $s \in S'$, in $A$.
A \emph{minor identity} is an identity where the two terms are minor terms.
An algebra $\mathbf{A}$ satisfies a minor identity $f_\sigma \approx_{S'} g_\pi$ if and only if for any (equivalently, for one) $u = u_1 \dots u_m \in W(S)$ with $\{u_1, \dots, u_m\} = S'$ that is a feasible arity for both $f_\sigma$ and $g_\pi$ it holds that
$(f^\mathbf{A})_\sigma^u = (g^\mathbf{A})_\pi^u$.

The satisfaction relation induces a Galois connection between multisorted algebras and minor identities.
For a class $\mathcal{K}$ of algebras, let $\mId \mathcal{K}$ be the set of all minor identities satisfied by every algebra in $\mathcal{K}$, and for a class $\mathcal{J}$ of minor identities, let $\Mod \mathcal{J}$ be the class of all algebras satisfying every identity in $\mathcal{J}$.
This Galois connection was investigated in \cite{LehPoeWal-R1}, where it was shown that the Galois closed classes of multisorted algebras (minor\hyp{}equational classes) are precisely the reflection\hyp{}closed varieties.

\begin{theorem}[{\cite[Theorem~5.2]{LehPoeWal-R1}}]
\label{thm:Mod-mId-K}
Let $\mathcal{K}$ be a class of multisorted algebras of a fixed type.
Then $\Mod \mId \mathcal{K} = \mathop{{\sR}{\sP}} \mathcal{K}$.
\end{theorem}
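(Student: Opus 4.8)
The plan is to prove the two inclusions separately. The inclusion $\mathop{{\sR}{\sP}}\mathcal{K}\subseteq\Mod\mId\mathcal{K}$ is the routine one, and $\Mod\mId\mathcal{K}\subseteq\mathop{{\sR}{\sP}}\mathcal{K}$ carries the weight of the argument; it is essentially a Birkhoff\hyp{}style construction in which the classical ``embed the relatively free algebra into a power, then take a quotient'' is packaged into a single reflection.

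For $\mathop{{\sR}{\sP}}\mathcal{K}\subseteq\Mod\mId\mathcal{K}$ it suffices to show that $\Mod\mId\mathcal{K}$ contains $\mathcal{K}$ and is closed under $\sP$ and $\sR$. Closure under $\sP$ is immediate, since terms are evaluated componentwise in a direct product and a sort empty in some factor is empty in the product, so vacuous satisfaction is respected. Closure under $\sR$ is the point where one uses that the identities are minor: if $\mathbf{B}$ is the $(h,h')$\hyp{}reflection of $\mathbf{A}$ and $u$ is a feasible arity for both $f_\sigma$ and $g_\pi$ with letters exactly $S'$, then unwinding Definitions~\ref{def:minor} and~\ref{def:reflection} gives $(f^{\mathbf{B}})^u_\sigma(\vect{b})=h'_s\bigl((f^{\mathbf{A}})^u_\sigma(h_u(\vect{b}))\bigr)$, and the same with $g$ in place of $f$, so $(f^{\mathbf{A}})^u_\sigma=(g^{\mathbf{A}})^u_\pi$ forces $(f^{\mathbf{B}})^u_\sigma=(g^{\mathbf{B}})^u_\pi$; reasonableness of the declarations guarantees that all component maps of $h,h'$ invoked here are defined.

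For the converse, let $\mathbf{B}=(B,\Sigma^{\mathbf{B}})$ satisfy every minor identity that holds throughout $\mathcal{K}$. Let $I$ be the set of all pairs $(\mathbf{A},\varphi)$ with $\mathbf{A}\in\mathcal{K}$ and $\varphi=(\varphi_t)_{t\in S}$ a family of maps $\varphi_t\colon B_t\to A_t$, and put $\mathbf{P}:=\prod_{(\mathbf{A},\varphi)\in I}\mathbf{A}\in\sP\mathcal{K}$, with the convention that an empty product is the trivial algebra (a one\hyp{}element carrier in each sort). For $s\in\esssorts{\mathbf{B}}$ set $h_s\colon B_s\to P_s$, $h_s(b):=(\varphi_s(b))_{(\mathbf{A},\varphi)\in I}$; this is well defined because any $\mathbf{A}$ occurring in $I$ has $A_s\neq\emptyset$ whenever $B_s\neq\emptyset$, so $\esssorts{\mathbf{B}}\subseteq\esssorts{\mathbf{P}}$ and a reflection of $\mathbf{P}$ into $\mathbf{B}$ can exist. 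I then want $h'_s\colon P_s\to B_s$ satisfying $h'_s\bigl(f^{\mathbf{P}}(h_w(\vect{b}))\bigr)=f^{\mathbf{B}}(\vect{b})$ for every $f\in\Sigma$ of declaration $(w,s)$, $w=s_1\dots s_n$, and every $\vect{b}\in B_w$, with $h'_s$ chosen arbitrarily on the rest of $P_s$ (possible since $B_s\neq\emptyset$). As $f^{\mathbf{P}}(h_w(\vect{b}))$ is the tuple $(f^{\mathbf{A}}(\varphi_{s_1}(b_1),\dots,\varphi_{s_n}(b_n)))_{(\mathbf{A},\varphi)}$, this prescription is consistent exactly when: if $f^{\mathbf{A}}(\varphi(\vect{b}))=g^{\mathbf{A}}(\varphi(\vect{c}))$ for all $(\mathbf{A},\varphi)\in I$, where $f,g$ share the output sort $s$, then $f^{\mathbf{B}}(\vect{b})=g^{\mathbf{B}}(\vect{c})$. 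Here the hypothesis enters: replacing the finitely many elements occurring among $\vect{b},\vect{c}$ by pairwise distinct variables of matching sort turns $f,\vect{b}$ into a minor term $f_\sigma$ and $g,\vect{c}$ into a minor term $g_\pi$, and the displayed family of equalities over all $(\mathbf{A},\varphi)$ says precisely that $\mathcal{K}$ satisfies $f_\sigma\approx_{S'}g_\pi$ with $S':=\esssorts{\mathbf{B}}$ — the choice $S'=\esssorts{\mathbf{B}}$ being what matches the two sides, since an $\mathbf{A}\in\mathcal{K}$ having some sort of $\esssorts{\mathbf{B}}$ empty both contributes no pair to $I$ and satisfies the identity vacuously. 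Since $\mathbf{B}\in\Mod\mId\mathcal{K}$, it satisfies $f_\sigma\approx_{S'}g_\pi$, and the valuation sending each variable back to the element of $B$ it names yields $f^{\mathbf{B}}(\vect{b})=g^{\mathbf{B}}(\vect{c})$. Thus $h'=(h'_s)_{s\in\esssorts{\mathbf{B}}}$ is well defined, $(h,h')$ is a reflection of $\mathbf{P}$ into $\mathbf{B}$, and by construction $f^{\mathbf{B}}=(f^{\mathbf{P}})_{(h,h')}$ for every $f\in\Sigma$, so $\mathbf{B}$ is the $(h,h')$\hyp{}reflection of $\mathbf{P}$ and hence $\mathbf{B}\in\mathop{{\sR}{\sP}}\mathcal{K}$.

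The step I expect to be delicate is the consistency check for $h'$, more precisely the correct bookkeeping of essential sorts and feasible arities in translating the family of equalities into a single member of $\mId\mathcal{K}$: taking $S'=\esssorts{\mathbf{B}}$ rather than merely the sorts of the variables is exactly what makes this work, since it forces every witnessing empty sort of an $\mathbf{A}\in\mathcal{K}$ to lie in $S'$. One should also handle the degenerate case $I=\emptyset$ separately: there $\mathbf{P}$ is the trivial algebra, and one checks directly that $\mathbf{B}\in\Mod\mId\mathcal{K}$ then forces every operation of $\mathbf{B}$ to be constant (the minor identities $f_\sigma\approx_{\esssorts{\mathbf{B}}}g_\pi$ all lie in $\mId\mathcal{K}$ because each $\mathbf{A}\in\mathcal{K}$ satisfies them vacuously), so a suitable $h'$ still exists. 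The remaining verifications — that the componentwise operations behave as asserted, and that all declarations involved are reasonable in the carriers in play — are routine.
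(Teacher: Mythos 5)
This theorem is not proved in the paper at all: it is imported as a black box from \cite[Theorem~5.2]{LehPoeWal-R1}, so there is no in\hyp{}paper argument to measure yours against. Taken on its own terms, your direct Birkhoff\hyp{}style construction is essentially right. The easy inclusion is correctly reduced to closure of $\Mod\mId\mathcal{K}$ under $\sP$ and $\sR$, and the identity $(f^{\mathbf{B}})^u_\sigma(\vect{b})=h'_s\bigl((f^{\mathbf{A}})^u_\sigma(h_u(\vect{b}))\bigr)$ is indeed the precise reason reflections preserve minor identities and nothing more. For the hard inclusion, the delicate point is exactly where you locate it: the consistency condition for $h'_s$ must be shown equivalent to membership of $f_\sigma\approx_{S'}g_\pi$ in $\mId\mathcal{K}$, and your choice $S'=\esssorts{B}$ is the one that makes ``$\mathbf{A}$ contributes no pair to $I$'' coincide with ``$\mathbf{A}$ satisfies the identity vacuously''; your treatment of the degenerate case $I=\emptyset$ (all operations of $\mathbf{B}$ of a given output sort collapse to a constant) is also correct, granted the usual convention that the empty product is the trivial algebra.

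There is one step that fails as literally written: $\mathcal{K}$ is a \emph{class}, so your index set $I$ of all pairs $(\mathbf{A},\varphi)$ is in general a proper class and the product $\prod_{(\mathbf{A},\varphi)\in I}\mathbf{A}$ does not exist. The standard repair applies verbatim. For each pair of instances $(f,\vect{b})$, $(g,\vect{c})$ of common output sort with $f^{\mathbf{B}}(\vect{b})\neq g^{\mathbf{B}}(\vect{c})$, the corresponding identity $f_\sigma\approx_{S'}g_\pi$ fails in $\mathbf{B}$ and hence does not lie in $\mId\mathcal{K}$, so one may choose a single witness $(\mathbf{A},\varphi)$ separating the two evaluations; since $\Sigma$ and $B$ are sets, the collection of such instance pairs is a set, and the product over the chosen set of witnesses already satisfies your consistency condition (in contrapositive form). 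With that adjustment, and the bookkeeping you have already supplied, the argument goes through.
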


\subsection*{Relation pairs}

Let $A$ be an $S$\hyp{}sorted set and $m \in \IN$.
An $m$\hyp{}ary \emph{$S$\hyp{}sorted relation} on $A$ is a family $R = (R_s)_{s \in S}$ where $R_s \subseteq A_s^m$ for every $s \in S$.
(Note that for $m > 0$, the only $m$\hyp{}ary relation on the empty set is $\emptyset$, and that there are precisely two $0$\hyp{}ary relations on any set: $\emptyset$ and $\{\emptyset\}$.)
An $m$\hyp{}ary \emph{$S$\hyp{}sorted relation pair} of $A$ is a pair $(R,R')$ where $R$ and $R'$ are $m$\hyp{}ary $S$\hyp{}sorted relations on $A$.
Denote by $\Relp^{(m)}$ the set of all $m$\hyp{}ary $S$\hyp{}sorted relation pairs on $A$ and by $\Relp(A)$ the set of all $S$\hyp{}sorted relation pairs on $A$, i.e.,
\[
\Relp(A) := \bigcup_{m \in \IN} \Relp^{(m)}(A).
\]

Let $f \colon A_w \to A_s$ with $w := s_1 \dots s_n$, and let $(R,R')$ be an $m$\hyp{}ary $S$\hyp{}sorted relation pair on $A$.
The operation $f$ \emph{preserves} the relation pair $(R,R')$ (or $f$ is a \emph{polymorphism} of $(R,R')$, or $(R,R')$ is an \emph{invariant relation pair} of $f$), denoted by $f \preserves (R,R')$, if for all $(a_{1i}, a_{2i}, \dots, a_{mi}) \in R_{s_i}$ ($i = 1, \dots, n$), it holds that
\[
( f(a_{11}, a_{12}, \dots, a_{1n}), f(a_{21}, a_{22}, \dots, a_{2n}), \dots, f(a_{m1}, a_{m2}, \dots, a_{mn}) ) \in R'_s.
\]

The preservation relation $\preserves$ induces a Galois connection between $S$\hyp{}sorted operations and $S$\hyp{}sorted relation pairs on $A$, consisting of the maps $\mPol \mathord{:} \linebreak \mathcal{P}(\Relp(A)) \to \mathcal{P}(\Op(A))$ and $\mInv \colon \mathcal{P}(\Op(A)) \to \mathcal{P}(\Relp(A))$ given by
\begin{align*}
\mPol(Q) &:= \{ f \in \Op(A) \mid \forall (R,R') \in Q \colon f \preserves (R,R') \}, \\
\mInv(F) &:= \{ (R,R') \in \Relp(A) \mid \forall f \in F \colon f \preserves (R,R') \},
\end{align*}
for any $F \subseteq \Op(A)$ and $Q \subseteq \Relp(A)$.

The closed sets of operations and relation pairs with respect to this Galois connection were described in \cite{LehPoeWal-R2} as the minor\hyp{}closed classes of operations (i.e., minions) and the so\hyp{}called minor\hyp{}closed classes of relation pairs, respectively.

Analogues of the ``elementary operations'' $\zeta$, $\tau$, $\pr$, $\times$, and $\wedge$ on relations (see \cite[Section~II.2.3]{Lau}, \cite[Subsections 1.1.7 and 1.1.9]{PosKal}) can be defined for $S$\hyp{}sorted relation pairs (see \cite[pp.\ 70--71]{LehPoeWal-R2}) by applying each operation componentwise and in parallel in each sort.
A relation pair $(R,R')$ is a \emph{relaxation} of $(\tilde{R}, \tilde{R}')$ if $R \subseteq \tilde{R}$ and $R' \supseteq \tilde{R}'$.
For an arbitrary equivalence relation $\varrho$ on $\nset{m}$, let $\delta^m_\varrho := (\delta^m_{\varrho,s})_{s \in S}$, where
\[
\delta^m_{\varrho,s} := \{ (a_1, \dots, a_m) \in A^m_s \mid (i,j) \in \varrho \implies a_i = a_j \}.
\]
Relation pairs of the form $(\delta^m_\varrho, \delta^m_\varrho)$ are called \emph{diagonal relation pairs.}
A set $Q \subseteq \Relp(A)$ of relation pairs is \emph{minor\hyp{}closed} if it contains the diagonal relation pairs and is closed under the elementary operations $\zeta$, $\tau$, $\pr$, $\times$, $\wedge$, relaxations, and arbitrary intersections.
For $Q \subseteq \Relp(A)$, we denote by $\mcr{Q}$ the minor\hyp{}closure of $Q$, i.e., the smallest minor\hyp{}closed set of relation pairs on $A$ that contains $Q$.

\begin{theorem}[{\cite[Theorems 4.10, 4.16]{LehPoeWal-R2}}]
\label{thm:mPolmInv-mInvmPol}
Let $A := (A_s)_{s \in S}$ be an $S$\hyp{}sorted set, and assume that the sets $A_s$ are all finite.
\begin{enumerate}[label={\upshape(\roman*)}]
\item\label{mPolmInv} Let $F \subseteq \Op(A)$.
Then $F = \mPol Q$ for some $Q \subseteq \Relp(A)$ if and only if $F$ is a minion.
Consequently, $\mcf{F} = \mPol \mInv F$ for any $F \subseteq \Op(A)$.

\item\label{mInvmPol} Let $Q \subseteq \Relp(A)$.
Then $Q = \mInv F$ for some $F \subseteq \Op(A)$ if and only if $Q$ is minor\hyp{}closed.
Consequently, $\mcr{Q} = \mInv \mPol Q$ for any $Q \subseteq \Relp(A)$.
\end{enumerate}
\end{theorem}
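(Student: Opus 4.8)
This is a Galois-connection characterization in the spirit of the classical $\Pol$--$\Inv$ theory, so the plan is to prove, for each of the two sides, the two implications separately. For part~\ref{mPolmInv} I must show that (a)~$\mPol Q$ is a minion for every $Q \subseteq \Relp(A)$, and (b)~every minion $F$ satisfies $\mPol \mInv F \subseteq F$ (the reverse inclusion is automatic for a Galois connection); dually, for part~\ref{mInvmPol} that (a$'$)~$\mInv F$ is minor\hyp{}closed for every $F \subseteq \Op(A)$, and (b$'$)~every minor\hyp{}closed $Q$ satisfies $\mInv \mPol Q \subseteq Q$. The ``Consequently'' clauses are then purely formal: $\mPol \mInv F$ is a minion containing $F$, hence contains $\mcf F$; and $F \subseteq \mcf F$ gives $\mInv \mcf F \subseteq \mInv F$, so $\mPol \mInv F \subseteq \mPol \mInv \mcf F = \mcf F$ by (b) applied to the minion $\mcf F$; therefore $\mcf F = \mPol \mInv F$, and symmetrically $\mcr Q = \mInv \mPol Q$.

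The ``only if'' halves (a) and (a$'$) are routine componentwise verifications mirroring the one\hyp{}sorted case and needing no finiteness. For (a): if $f \preserves (R,R')$ and $f^u_\sigma$ is a minor of $f$, then a tuple\hyp{}wise computation using the sort\hyp{}compatibility of $\sigma$ gives $f^u_\sigma \preserves (R,R')$, so $\mPol Q$ is closed under minors. For (a$'$): the diagonal relation pairs $(\delta^m_\varrho, \delta^m_\varrho)$ are invariant for every operation, and each of $\zeta$, $\tau$, $\pr$, $\times$, $\wedge$, together with relaxation and arbitrary intersection, carries invariant relation pairs of $f$ to invariant relation pairs of $f$; performing these checks in parallel over all sorts and all $f \in F$ shows $\mInv F$ is minor\hyp{}closed.

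The substance is (b). Let $F$ be a minion and let $g \in \mPol \mInv F$ have declaration $(w,s)$, $w = s_1 \dots s_n$. Put $m := \card{A_w} = \prod_{i=1}^{n} \card{A_{s_i}}$, which is \emph{finite} by hypothesis, and enumerate $A_w = \{\vect a^1, \dots, \vect a^m\}$, $\vect a^j = (a^j_1, \dots, a^j_n)$. Define an $m$\hyp{}ary relation pair $(R,R')$ on $A$: for a sort $t$ occurring in $w$ let $R_t := \{\, (a^1_i, \dots, a^m_i) \mid i \in \nset n,\ s_i = t \,\}$ (the ``column tuples'' of the arguments of sort $t$), and $R_t := \emptyset$ for every $t$ not occurring in $w$; let $R'_s := \{\, (f(\vect a^1), \dots, f(\vect a^m)) \mid f \in F,\ \declaration(f) = (w,s) \,\}$ and $R'_t := A^m_t$ for $t \neq s$. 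Then $(R,R') \in \mInv F$: for $f \in F$ of declaration $(w',t')$, if $t' \neq s$ the target $R'_{t'}$ is full, and if some input sort of $f$ does not occur in $w$ the corresponding $R_t$ is empty, so preservation is trivial in both cases; otherwise $t' = s$ and all input sorts of $f$ occur in $w$, and then computing $f$ coordinatewise on any choice of column tuples from the relevant $R_t$'s reproduces the value tuple of some minor of $f$; this minor has declaration $(w,s)$ and so lies in $F$, hence that value tuple lies in $R'_s$. Since $g \in \mPol \mInv F$ we get $g \preserves (R,R')$; feeding the $n$ columns $(a^1_i, \dots, a^m_i)$, $i = 1, \dots, n$, into $g$ yields $(g(\vect a^1), \dots, g(\vect a^m)) \in R'_s$, so this tuple equals $(f(\vect a^1), \dots, f(\vect a^m))$ for some $f \in F$ of declaration $(w,s)$. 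An operation of declaration $(w,s)$ is determined by its values on all of $A_w$, whence $g = f \in F$. (The degenerate cases $w = \emptyword$ or $A_w = \emptyset$ are covered by reading the same argument literally, with $A_\emptyword = \{\emptyset\}$.)

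For (b$'$) I would run the dual construction. Given a minor\hyp{}closed $Q$ and $(R,R') \in \mInv \mPol Q$ of arity $m$, finiteness of the $A_t$ makes $R$ and $R'$ finite sets of tuples, and the task is to realize $(R,R')$ from members of $Q$ by the elementary operations --- that is, to write down an explicit primitive\hyp{}positive\hyp{}style combination (an intersection of relaxations of projections of direct products of members of $Q$ and diagonal relation pairs), built over an $m$\hyp{}element template in the same spirit as the relation pair of (b), and to show it reconstructs $(R,R')$ exactly; membership in $\mcr Q = Q$ then follows. \textbf{I expect this step --- and in particular the nontrivial inclusion $\mInv \mPol Q \subseteq Q$, the reverse being immediate --- to be the main obstacle}: it is precisely where the finiteness hypothesis is essential (without it only a local, topologically closed version survives), and where one must be careful with empty sorts and $0$\hyp{}ary relations.
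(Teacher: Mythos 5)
First, a point of comparison: the paper does not prove this theorem at all --- it is imported verbatim from \cite[Theorems 4.10, 4.16]{LehPoeWal-R2} and used as a black box. So there is no in-paper argument to measure you against; what follows assesses your reconstruction on its own terms.

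Your overall decomposition (closure properties in one direction, a separating construction in the other, with the ``Consequently'' clauses as formal corollaries) is the right one, and your proof of the hard half of part (i) is essentially correct and complete. The relation pair $(R,R')$ built from the full enumeration of $A_w$ --- columns of each input sort on the left, the set of value tuples of all $f \in F$ with declaration $(w,s)$ on the right, padded with $\emptyset$ on sorts absent from $w$ and with $A_t^m$ on output sorts other than $s$ --- is exactly the multisorted analogue of Pippenger's separating invariant, and your verification that it lies in $\mInv F$ correctly isolates the one nontrivial case (output sort $s$, all input sorts occurring in $w$), where minor\hyp{}closure of $F$ is what puts the resulting value tuple into $R'_s$. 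The degenerate cases ($A_w = \emptyset$, empty $R'_s$) do need the observation that the relevant empty operations are themselves minors of members of $F$ and hence already in $F$, but this works out as you suggest.

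The genuine gap is in part (ii). You prove that $\mInv F$ is always minor\hyp{}closed, but for the converse --- the inclusion $\mInv \mPol Q \subseteq Q$ for minor\hyp{}closed $Q$, equivalently $\mInv \mPol Q \subseteq \mcr{Q}$ in general --- you only announce that you ``would run the dual construction'' and correctly flag it as the main obstacle; no construction is actually given, and this half of the theorem is used repeatedly in the paper (e.g.\ in Propositions \ref{prop:Relax-mInv}, \ref{prop:R}, \ref{prop:P}), so it cannot be waved away. What is needed is the relation\hyp{}pair analogue of the Geiger/Bodnar\-chuk--Kalu\v{z}nin--Kotov--Romov ``general superposition'' argument: given an $m$\hyp{}ary $(R,R') \in \mInv \mPol Q$, one lists the finitely many tuples making up $R$ (finiteness of all $A_s$ enters exactly here), forms from $Q$ by $\times$, $\zeta$, $\tau$, $\wedge$, diagonals, and an intersection over all sort\hyp{}compatible assignments a ``generic'' pair whose second component is precisely $\{\,g \ast R \mid g \in \mPol Q\,\}$ (the tuples obtainable by applying polymorphisms columnwise to $R$) and whose first component contains $R$, and then observes that $(R,R')$ is a relaxation of a projection of this pair. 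Verifying that the generic pair really computes that set --- and handling empty sorts and $0$\hyp{}ary components --- is the substance of \cite[Theorem 4.16]{LehPoeWal-R2}, and your proposal stops exactly at its threshold.
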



\section{Reflection, coreflection, lifting, and flattening of relation pairs}\label{sec:refl-corefl}
\label{reflection-coreflection}

\subsection*{Reflection and coreflection}

In this subsection, we introduce a pair of new concepts that forms a counterpart to reflections of operations: reflections and coreflections of relation pairs.
These prove to be useful for describing reflections of minions in terms of invariant relation pairs.

\begin{definition}
Let $A$ and $B$ be $S$\hyp{}sorted sets, and let $(h,h')$ be a reflection of $A$ into $B$.
Let $(R,R')$ be an $S$\hyp{}sorted relation pair on $A$.
The \emph{$(h,h')$\hyp{}reflection} of $(R,R')$ is the $S$\hyp{}sorted relation pair $(R,R')_{(h,h')}$ on $B$ given by
$(R,R')_{(h,h')} := (h^{-1}(R), h'(R'))$,
where
$h^{-1}(R) := (T_s)_{s \in S}$ and $h'(R') := (T'_s)_{s \in S}$ with
\[
T_s :=
\begin{cases}
h_s^{-1}(R_s), & \text{if $B_s \neq \emptyset$,} \\
\emptyset, & \text{if $B_s = \emptyset$,}
\end{cases}
\qquad\quad
T'_s :=
\begin{cases}
h'_s(R'_s), & \text{if $B_s \neq \emptyset$,} \\
\emptyset, & \text{if $B_s = \emptyset$,}
\end{cases}
\]
and
\begin{align*}
h_s^{-1}(R_s) &:= \{(a_1, \dots, a_m) \in B_s^m \mid (h_s(a_1), \dots, h_s(a_m)) \in R_s\}, \\
h'_s(R'_s) &:= \{(h'_s(a_1), \dots, h'_s(a_m)) \mid (a_1, \dots, a_m) \in R'_s\}.
\end{align*}
Let $(T,T')$ be an $S$\hyp{}sorted relation pair on $B$.
The \emph{$(h,h')$\hyp{}coreflection} of \linebreak $(T,T')$ is the $S$\hyp{}sorted relation pair $(T,T')^{(h,h')}$ on $A$ given by
$(T,T')^{(h,h')} := \linebreak (h(T), h'^{-1}(T'))$,
where $h(T) := (R_s)_{s \in S}$ and $h'^{-1}(T') := (R'_s)_{s \in S}$ with
\[
R_s :=
\begin{cases}
h_s(T_s), & \text{if $B_s \neq \emptyset$,} \\
\emptyset, & \text{if $B_s = \emptyset$,}
\end{cases}
\qquad\quad
R'_s :=
\begin{cases}
{h'_s}^{-1}(T'_s), & \text{if $B_s \neq \emptyset$,} \\
\emptyset, & \text{if $B_s = \emptyset$,}
\end{cases}
\]

Let $Q \subseteq \Relp(A)$ be a set of relation pairs on $A$.
The \emph{$(h,h')$\hyp{}reflection} of $Q$ is the set $Q_{(h,h')} := \{(R,R')_{(h,h')} \mid (R,R') \in Q\}$ of relation pairs on $B$.
We define \emph{$(h,h')$\hyp{}coreflections} of sets of relation pairs analogously.
A relation pair (a set of relation pairs) is called a \emph{reflection} (a \emph{coreflection}) of another relation pair (set of relation pairs) if the former is the $(h,h')$\hyp{}reflection ($(h,h')$\hyp{}coreflection) of the latter for some reflection $(h,h')$.
\end{definition}

\begin{proposition}[{\cite[Proposition~5.4]{LehPoeWal-R2}}]
\label{prop:refl-mInv}
Let $A$ and $B$ be $S$\hyp{}sorted sets, $(R,R') \in \Relp(A)$, $(T,T') \in \Relp(B)$, and let $(h,h')$ be a reflection of $A$ into $B$.
Let $f \in \Op(A)$, and assume that $\declaration(f)$ is reasonable in $B$.
Then the following statements hold.
\begin{enumerate}[label={\upshape(\roman*)}]
\item\label{f-fhh} If $f \preserves (R,R')$ then $f_{(h,h')} \preserves (R,R')_{(h,h')}$.
\item\label{fhh-f} If $f_{(h,h')} \preserves (T,T')$ then $f \preserves (T,T')^{(h,h')}$.
\item\label{mInvFhh} If $F \subseteq \Op(A)$ and $\declaration(f)$ is reasonable in $B$ for all $f \in F$, then
\[
\mInv F_{(h,h')} = \{ (T,T') \in \Relp(B) \mid (T,T')^{(h,h')} \in \mInv F \}.
\]
\end{enumerate}
\end{proposition}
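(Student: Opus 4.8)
The plan is to establish \ref{f-fhh} and \ref{fhh-f} by unravelling the definitions of $f_{(h,h')}$, of the reflection $(R,R')_{(h,h')}$, and of the coreflection $(T,T')^{(h,h')}$ of a relation pair, and then to derive \ref{mInvFhh} from these two together with the monotonicity of $\preserves$ under relaxations. Write $w = s_1 \dots s_n$ for $\arity(f)$ and $m$ for the arity of the relation pair involved. For \ref{f-fhh}: assuming $f \preserves (R,R')$, if $B_w = \emptyset$ then $B_{s_j} = \emptyset$ for some $j$, so the $s_j$-component of $h^{-1}(R)$ is $\emptyset$ by definition and the preservation condition for $f_{(h,h')}$ is vacuous; if $B_w \neq \emptyset$ then $s_1, \dots, s_n \in \esssorts{B}$, and $s \in \esssorts{B}$ as well since $\declaration(f)$ is reasonable in $B$, so all of $h_{s_1}, \dots, h_{s_n}, h'_s$ are defined. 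Now from tuples $(b_{1i}, \dots, b_{mi}) \in h_{s_i}^{-1}(R_{s_i})$ for $i = 1, \dots, n$ — that is, $(h_{s_i}(b_{1i}), \dots, h_{s_i}(b_{mi})) \in R_{s_i}$ — preservation of $(R,R')$ by $f$ gives $\bigl(f(h_{s_1}(b_{11}), \dots, h_{s_n}(b_{1n})), \dots, f(h_{s_1}(b_{m1}), \dots, h_{s_n}(b_{mn}))\bigr) \in R'_s$, and applying $h'_s$ coordinatewise and recalling $f_{(h,h')}(b_{k1}, \dots, b_{kn}) = h'_s(f(h_{s_1}(b_{k1}), \dots, h_{s_n}(b_{kn})))$ places this tuple in $h'_s(R'_s)$, which is exactly $f_{(h,h')} \preserves (R,R')_{(h,h')}$. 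Part \ref{fhh-f} runs the same chain backwards: from $(a_{1i}, \dots, a_{mi}) \in h_{s_i}(T_{s_i})$ choose preimages $(b_{1i}, \dots, b_{mi}) \in T_{s_i}$, feed them to $f_{(h,h')} \preserves (T,T')$, and read off that the tuple of $f$-values lies in ${h'_s}^{-1}(T'_s)$; again $B_w = \emptyset$ is vacuous since then $h(T)$ has an empty component at an input sort of $f$.

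For \ref{mInvFhh}, note $F_{(h,h')} = \{f_{(h,h')} \mid f \in F\}$, so the left-hand side, $(T,T') \in \mInv F_{(h,h')}$, says that $f_{(h,h')} \preserves (T,T')$ for every $f \in F$, while the right-hand condition $(T,T')^{(h,h')} \in \mInv F$ says that $f \preserves (T,T')^{(h,h')}$ for every $f \in F$. The inclusion from left to right is immediate from \ref{fhh-f}. For the reverse inclusion, assume $(T,T')^{(h,h')} \in \mInv F$, fix $f \in F$, and apply \ref{f-fhh} to the relation pair $(T,T')^{(h,h')}$ on $A$ to get $f_{(h,h')} \preserves \bigl((T,T')^{(h,h')}\bigr)_{(h,h')}$. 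Now $\bigl((T,T')^{(h,h')}\bigr)_{(h,h')} = (h^{-1}(h(T)),\, h'(h'^{-1}(T')))$, and the elementary set-theoretic inclusions $X \subseteq \varphi^{-1}(\varphi(X))$ and $\varphi(\varphi^{-1}(Y)) \subseteq Y$, applied sortwise, show that $(T,T')$ is a relaxation of $\bigl((T,T')^{(h,h')}\bigr)_{(h,h')}$. Since $\preserves$ is inherited by relaxations of its relational argument — shrinking the first component and enlarging the second only weakens the condition — we conclude $f_{(h,h')} \preserves (T,T')$, as required.

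The main obstacle I anticipate is bookkeeping rather than depth: one must consistently track when $B_w = \emptyset$ forces $f_{(h,h')}$ to be the empty map and collapses the preservation statements to vacuities, and invoke reasonableness of $\declaration(f)$ in $B$ exactly at the points where $h_{s_i}$ and $h'_s$ must exist. For \ref{mInvFhh} the one nonobvious choice is to prove the nontrivial inclusion via \ref{f-fhh} plus the relaxation-monotonicity of $\preserves$, rather than attempting a literal converse of \ref{fhh-f}.
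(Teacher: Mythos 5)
Your proof is correct, but note that the paper itself gives no proof of this proposition --- it is quoted from \cite[Proposition~5.4]{LehPoeWal-R2} --- so there is nothing in-paper to compare against. Your argument is the expected direct verification: parts \ref{f-fhh} and \ref{fhh-f} by unwinding the definitions (with the empty-$B_w$ cases correctly dismissed as vacuous), and part \ref{mInvFhh} obtained from \ref{fhh-f} in one direction and from \ref{f-fhh} combined with the relaxation inclusions $T \subseteq h^{-1}(h(T))$, $h'(h'^{-1}(T')) \subseteq T'$ and the monotonicity of $\preserves$ under relaxations in the other --- exactly the devices the paper itself uses in Proposition~\ref{prop:Relax-mInv} and Lemmas~\ref{lem:mPolRelaxQ} and~\ref{lem:Qhh}.
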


The last statement can be written in alternative form.
For a set $Q \subseteq \Relp(A)$ of relation pairs, denote by $\Relax(Q)$ the set of all relaxations of members of $Q$.

\begin{proposition}
\label{prop:Relax-mInv}
Let $A$ and $B$ be $S$\hyp{}sorted sets, let $F \subseteq \Op(A)$ and assume that $\declaration(f)$ is reasonable in $B$ for all $f \in F$, and let $(h,h')$ be a reflection of $A$ into $B$.
Then $\mInv F_{(h,h')} = \Relax((\mInv F)_{(h,h')})$.
\end{proposition}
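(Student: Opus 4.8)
The plan is to deduce Proposition~\ref{prop:Relax-mInv} from Proposition~\ref{prop:refl-mInv}\ref{mInvFhh}, so the work reduces to showing that the set $\{(T,T') \in \Relp(B) \mid (T,T')^{(h,h')} \in \mInv F\}$ coincides with $\Relax((\mInv F)_{(h,h')})$. I would prove the two inclusions separately, and the crux of both will be understanding how the coreflection $(-)^{(h,h')}$ interacts with the reflection $(-)_{(h,h')}$ on a single relation pair, together with how each behaves with respect to relaxation.

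For the inclusion $\supseteq$: take $(T,T') \in \Relax((\mInv F)_{(h,h')})$, so there is some $(R,R') \in \mInv F$ with $(R,R')_{(h,h')} = (h^{-1}(R), h'(R'))$ relaxed by $(T,T')$, i.e.\ $T_s \subseteq h_s^{-1}(R_s)$ and $T'_s \supseteq h'_s(R'_s)$ for all essential $s$ (and both components empty on non-essential sorts). The key computation is that $h_s(T_s) \subseteq h_s(h_s^{-1}(R_s)) \subseteq R_s$ and ${h'_s}^{-1}(T'_s) \supseteq {h'_s}^{-1}(h'_s(R'_s)) \supseteq R'_s$, so $(T,T')^{(h,h')}$ is a relaxation of $(R,R')$; since $(R,R') \in \mInv F$ and $\mInv F$ is closed under relaxations (it is minor-closed by Theorem~\ref{thm:mPolmInv-mInvmPol}\ref{mInvmPol}, or directly because preservation is inherited by relaxations), we get $(T,T')^{(h,h')} \in \mInv F$, hence $(T,T') \in \mInv F_{(h,h')}$ by Proposition~\ref{prop:refl-mInv}\ref{mInvFhh}.

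For the inclusion $\subseteq$: take $(T,T')$ with $(T,T')^{(h,h')} = (h(T), {h'}^{-1}(T')) \in \mInv F$. Write $(R,R') := (T,T')^{(h,h')}$ and consider its reflection $(R,R')_{(h,h')} = (h^{-1}(h(T)), h'({h'}^{-1}(T')))$. The elementary facts $T_s \subseteq h_s^{-1}(h_s(T_s))$ and $h'_s({h'_s}^{-1}(T'_s)) \subseteq T'_s$ show precisely that $(T,T')$ is a relaxation of $(R,R')_{(h,h')}$, which lies in $(\mInv F)_{(h,h')}$ because $(R,R') \in \mInv F$; hence $(T,T') \in \Relax((\mInv F)_{(h,h')})$. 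One should double-check the degenerate behaviour on non-essential sorts of $B$, where all the relevant components are $\emptyset$ by definition and the inclusions hold trivially.

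The main obstacle I anticipate is not conceptual but bookkeeping: making sure the arities match throughout (reflection, coreflection, and relaxation all preserve arity, so this is fine), and carefully handling the case distinction $B_s = \emptyset$ versus $B_s \neq \emptyset$ in the definitions of $h^{-1}(R)$, $h'(R')$, $h(T)$, ${h'}^{-1}(T')$ so that one never accidentally applies $h_s$ or $h'_s$ where they are undefined. The set-theoretic inequalities $g^{-1}(g(X)) \supseteq X$ and $g(g^{-1}(Y)) \subseteq Y$ are the entire engine of the argument and are applied sortwise; once those are in place the proof is a short chain of inclusions invoking Proposition~\ref{prop:refl-mInv}\ref{mInvFhh} and relaxation-closedness of $\mInv F$.
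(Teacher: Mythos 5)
Your proof is correct. One direction (showing $\mInv F_{(h,h')} \subseteq \Relax((\mInv F)_{(h,h')})$) is essentially identical to the paper's: apply Proposition~\ref{prop:refl-mInv}\ref{mInvFhh}, reflect the coreflection back, and use $T \subseteq h^{-1}(h(T))$, $h'(h'^{-1}(T')) \subseteq T'$. For the other direction you take a genuinely different route. The paper pushes forward: from $(R,R') \in \mInv F$ it deduces $(R,R')_{(h,h')} \in \mInv F_{(h,h')}$ via Proposition~\ref{prop:refl-mInv}\ref{f-fhh}, and then absorbs the relaxation by invoking minor\hyp{}closedness of $\mInv F_{(h,h')}$ through Theorem~\ref{thm:mPolmInv-mInvmPol}\ref{mInvmPol}. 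You instead pull back: you show via $h_s(h_s^{-1}(R_s)) \subseteq R_s$ and ${h'_s}^{-1}(h'_s(R'_s)) \supseteq R'_s$ that $(T,T')^{(h,h')}$ is a relaxation of $(R,R')$, use relaxation\hyp{}closedness of $\mInv F$, and finish with Proposition~\ref{prop:refl-mInv}\ref{mInvFhh} again. Your version is slightly more economical and uniform, since both inclusions run through the single characterization of part \ref{mInvFhh}; it also has a small technical advantage: Theorem~\ref{thm:mPolmInv-mInvmPol} carries a finiteness hypothesis not present in the proposition, whereas your fallback observation that preservation is inherited by relaxations is elementary and hypothesis\hyp{}free (the paper's use of the theorem here is likewise replaceable by that direct argument, so this is a matter of presentation rather than a substantive gap in either proof). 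Your attention to the $B_s = \emptyset$ case is appropriate and the inclusions do hold trivially there.
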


\begin{proof}
Let $(T,T') \in \mInv F_{(h,h')}$.
By Proposition~\ref{prop:refl-mInv}\ref{mInvFhh}, $(h(T),h'^{-1}(T')) \in \mInv F$,
so $(h^{-1}(h(T)), h'(h'^{-1}(T'))) \in (\mInv F)_{(h,h')}$.
Since $T \subseteq h^{-1}(h(T))$ and $T' \supseteq h'(h'^{-1}(T'))$,
$(T,T')$ is a relaxation of $(h^{-1}(h(T)), h'(h'^{-1}(T')))$; hence $(T,T') \in \Relax((\mInv F)_{(h,h')})$.

For the converse inclusion, let $(T,T') \in \Relax((\mInv F)_{(h,h')})$.
Then there exists $(\tilde{T},\tilde{T}') \in (\mInv F)_{(h,h')}$ such that $T \subseteq \tilde{T}$, $T' \supseteq \tilde{T}'$,
and $(\tilde{T},\tilde{T}') = (R,R')_{(h,h')} = (h^{-1}(R), h'(R))$ for some $(R,R') \in \mInv F$.
Thus $f \preserves (R,R')$ for all $f \in F$.
By Proposition~\ref{prop:refl-mInv}\ref{f-fhh}, $f_{(h,h')} \preserves (h^{-1}(R),h'(R))$ for every $f \in F$, so $(\tilde{T},\tilde{T}') \in \mInv F_{(h,h')}$.
By Theorem~\ref{thm:mPolmInv-mInvmPol}\ref{mInvmPol}, $\mInv F_{(h,h')}$ is minor\hyp{}closed and hence contains all relaxations of its members; therefore $(T,T') \in \mInv F_{(h,h')}$.
\end{proof}

\begin{lemma}
\label{lem:mPolRelaxQ}
For any $Q \subseteq \Relp(A)$, we have $\mPol Q = \mPol \Relax Q$.
\end{lemma}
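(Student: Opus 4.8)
The plan is to prove the two inclusions $\mPol Q \subseteq \mPol \Relax Q$ and $\mPol \Relax Q \subseteq \mPol Q$ separately, the second being immediate from $Q \subseteq \Relax Q$ (every relation pair is a relaxation of itself, taking $\tilde{R} = R$, $\tilde{R}' = R'$) together with the antitonicity of $\mPol$. So the content is entirely in the first inclusion, which amounts to the following observation: if an operation $f$ preserves a relation pair $(\tilde{R}, \tilde{R}')$, then it preserves every relaxation $(R,R')$ of it, i.e.\ every pair with $R \subseteq \tilde{R}$ and $R' \supseteq \tilde{R}'$.

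To verify that single-pair claim, suppose $f \colon A_w \to A_s$ with $w = s_1 \dots s_n$ preserves $(\tilde{R}, \tilde{R}')$, and let $(R,R')$ be a relaxation of $(\tilde{R},\tilde{R}')$. Take any tuples $(a_{1i}, \dots, a_{mi}) \in R_{s_i}$ for $i = 1, \dots, n$. Since $R_{s_i} \subseteq \tilde{R}_{s_i}$, these tuples also lie in $\tilde{R}_{s_i}$, so by $f \preserves (\tilde{R}, \tilde{R}')$ the tuple $(f(a_{11}, \dots, a_{1n}), \dots, f(a_{m1}, \dots, a_{mn}))$ lies in $\tilde{R}'_s$; and since $\tilde{R}'_s \subseteq R'_s$, it lies in $R'_s$ as well. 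Hence $f \preserves (R,R')$. Taking $f \in \mPol Q$ and $(R,R') \in \Relax Q$ arbitrary — so $(R,R')$ is a relaxation of some $(\tilde{R},\tilde{R}') \in Q$, which $f$ preserves since $f \in \mPol Q$ — this gives $f \preserves (R,R')$, whence $f \in \mPol \Relax Q$.

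There is no real obstacle here; the statement is essentially a reformulation of the monotonicity of the preservation relation in the second component and antimonotonicity in the first, which is built into the definition of $\preserves$. The only point requiring minimal care is to keep the quantifier over the arguments $i \in \nset{n}$ straight and to recall the edge cases (the relations may be over the empty set, or $0$-ary), but the argument above goes through verbatim in those cases too since it only uses set inclusions. I would present the proof in three or four lines: state the trivial inclusion, then give the one-paragraph argument for the nontrivial one as above.

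\begin{proof}
Since $Q \subseteq \Relax Q$ and $\mPol$ is antitone, $\mPol \Relax Q \subseteq \mPol Q$.
For the reverse inclusion, let $f \in \mPol Q$ with $\declaration(f) = (w,s)$, $w = s_1 \dots s_n$, and let $(R,R') \in \Relax Q$, say $(R,R')$ is a relaxation of $(\tilde{R}, \tilde{R}') \in Q$, so that $R \subseteq \tilde{R}$, $R' \supseteq \tilde{R}'$, and $f \preserves (\tilde{R}, \tilde{R}')$.
For arbitrary $(a_{1i}, \dots, a_{mi}) \in R_{s_i}$ ($i = 1, \dots, n$) we have $(a_{1i}, \dots, a_{mi}) \in \tilde{R}_{s_i}$, hence $(f(a_{11}, \dots, a_{1n}), \dots, f(a_{m1}, \dots, a_{mn})) \in \tilde{R}'_s \subseteq R'_s$.
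Thus $f \preserves (R,R')$, and since $(R,R') \in \Relax Q$ was arbitrary, $f \in \mPol \Relax Q$.
\end{proof}
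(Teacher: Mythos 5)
Your proof is correct and follows essentially the same route as the paper's: the inclusion $\mPol \Relax Q \subseteq \mPol Q$ from $Q \subseteq \Relax Q$ and antitonicity, and the converse by observing that a polymorphism of a relation pair is a polymorphism of every relaxation of it. You merely unfold the definition of $\preserves$ at the step the paper dismisses as "clear," which is a harmless (and arguably welcome) addition.
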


\begin{proof}
Since $Q \subseteq \Relax Q$, we have $\mPol \Relax Q \subseteq \mPol Q$ by the basic properties of Galois connections.
In order to prove the converse inclusion $\mPol Q \subseteq \mPol \Relax Q$, let $f \in \mPol Q$, and let $(R,R') \in \Relax Q$.
Then $(R,R')$ is a relaxation of some $(T,T') \in Q$.
Since $R \subseteq T$ and $T' \subseteq R'$ and $f \preserves (T,T')$, it is clear that $f \preserves (R,R')$.
Therefore $f \in \mPol \Relax Q$.
\end{proof}

\subsection*{Lifting and flattening}

We now define another pair of useful concepts, two maps that provide translations between relation pairs defined on a multisorted set $A$ and ones defined on a finite power of $A$: lifting and flattening.

\begin{definition}
The sets $A^{nk}$ and $(A^k)^n$ are obviously in a one\hyp{}to\hyp{}one correspondence via
the \emph{lifting} map
$\lift{} \colon A^{nk} \to (A^k)^n$
and its inverse, the \emph{flattening} map
$\flatten{} \colon (A^k)^n \to A^{nk}$,
defined by
\begin{gather*}
\lift{(a_{11}, \dots, a_{1k}, \dots, a_{n1}, \dots, a_{nk})} := ((a_{11}, \dots, a_{1k}), \dots, (a_{n1}, \dots, a_{nk})), \\
\flatten{((a_{11}, \dots, a_{1k}), \dots, (a_{n1}, \dots, a_{nk}))} := (a_{11}, \dots, a_{1k}, \dots, a_{n1}, \dots, a_{nk}).
\end{gather*}

The lifting and flattening maps induce maps between the power sets $\mathcal{P}(A^{nk})$ and $\mathcal{P}((A^k)^n)$ in the natural way:
for any $\varrho \subseteq A^{nk}$ and $\sigma \subseteq (A^k)^n$, define
$\lift{\varrho} := \{\lift{\vect{a}} \mid \vect{a} \in \varrho\}$ and $\flatten{\sigma} := \{\flatten{\vect{a}} \mid \vect{a} \in \sigma\}$.
For relation pairs of suitable arities, we write $\lift{(R,R')} := (\lift{R}, \lift{R'})$ and $\flatten{(R,R')} := (\flatten{R}, \flatten{R'})$.
For a set $Q \subseteq \Relp(A)$ of relation pairs of arbitrary arities, define
\[
\lift{Q} :=
\{ \lift{(R,R')} \mid \text{$(R,R') \in Q$ and the arity of $(R,R')$ is divisible by $k$} \},
\]
and for $T \subseteq \Relp(A^k)$, define
\[
\flatten{T} := \{ \flatten{(R,R')} \mid (R,R') \in T \}.
\]
\end{definition}

\begin{lemma}
\label{lem:flatten}
\leavevmode
\begin{enumerate}[label={\upshape(\roman*)}]
\item\label{lem:flatten:1}
For any $(R,R') \in \Relp(A)$ with arity divisible by $k$ and for any $(T,T') \in \Relp(A^k)$, we have $\flatten{\lift{(R,R')}} = (R,R')$ and $\lift{\flatten{(T,T')}} = (T,T')$.
\item\label{lem:flatten:2}
For any $Q \subseteq \Relp(A)$, $\mcr{Q} = \mcr{\flatten{\lift{\mcr{Q}}}}$.
\item\label{lem:flatten:3}
For any $f \in \Op(A)$ and $(T,T') \in \Relp(A^k)$,
we have $f^{\otimes k} \preserves (T,T')$ if and only if $f \preserves \flatten{(T,T')}$.
\item\label{lem:flatten:4}
For any $\mathcal{M} \subseteq \Op(A)$ and $(T,T') \in \Relp(A^k)$,
$(T,T') \in \mInv \mathcal{M}^{\otimes k}$ if and only if $\flatten{(T,T')} \in \mInv \mathcal{M}$.
\item\label{lem:flatten:5}
For any $\mathcal{M} \subseteq \Op(A)$,
$\mInv \mathcal{M}^{\otimes k} = \lift{(\mInv \mathcal{M})}$.
\end{enumerate}
\end{lemma}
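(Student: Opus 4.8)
I will prove Lemma~\ref{lem:flatten}\ref{lem:flatten:5} by combining the three preceding parts of the lemma. The statement is an equality of two subsets of $\Relp(A^k)$, so I will argue both inclusions by picking an arbitrary relation pair $(T,T')$ on $A^k$ and chasing it through the definitions. The key observation is that $\flat_k$ and $\sharp_k$ are mutually inverse bijections between $\Relp(A^k)$ and the relation pairs on $A$ whose arity is divisible by $k$ (part~\ref{lem:flatten:1}), so membership on one side translates to membership on the other via the criterion already proved in part~\ref{lem:flatten:4}.

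\begin{proof}
Let $(T,T') \in \Relp(A^k)$. By Lemma~\ref{lem:flatten}\ref{lem:flatten:4}, $(T,T') \in \mInv \mathcal{M}^{\otimes k}$ if and only if $\flatten{(T,T')} \in \mInv \mathcal{M}$. By the definition of $\lift{(\mInv \mathcal{M})}$, the latter holds if and only if $\lift{\flatten{(T,T')}} \in \lift{(\mInv \mathcal{M})}$ --- here we use that the lifting map is injective on relation pairs and that $\flatten{(T,T')}$ has arity divisible by $k$, so that $\flatten{(T,T')} \in \mInv \mathcal{M}$ is equivalent to its image $\lift{\flatten{(T,T')}}$ lying in $\lift{(\mInv\mathcal{M})}$. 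Finally, by Lemma~\ref{lem:flatten}\ref{lem:flatten:1}, $\lift{\flatten{(T,T')}} = (T,T')$. Chaining these equivalences, $(T,T') \in \mInv \mathcal{M}^{\otimes k}$ if and only if $(T,T') \in \lift{(\mInv \mathcal{M})}$, which proves $\mInv \mathcal{M}^{\otimes k} = \lift{(\mInv \mathcal{M})}$.
\end{proof}

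\noindent\textbf{Remarks on the argument.} The only subtlety, and the step I would be most careful about, is the ``if and only if'' in the middle: the definition of $\lift{Q}$ restricts to relation pairs in $Q$ whose arity is divisible by $k$, so I must check that every element of $\lift{(\mInv\mathcal{M})}$ is of the form $\lift{(R,R')}$ with $(R,R') \in \mInv\mathcal{M}$ of arity divisible by $k$, and conversely that $\flatten{(T,T')}$ automatically has arity divisible by $k$ (indeed, if $(T,T')$ is $m$-ary on $A^k$, then $\flatten{(T,T')}$ is $mk$-ary). Both facts are immediate from the definitions of $\flat_k$ and $\sharp_k$, but they are what makes the set on the right-hand side exactly the image of $\mInv\mathcal{M}$ under $\sharp_k$ with nothing extra. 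Everything else is a formal consequence of parts \ref{lem:flatten:1} and \ref{lem:flatten:4}; no new computation with operations or the preservation relation is needed, since part~\ref{lem:flatten:4} (which in turn rests on part~\ref{lem:flatten:3}) has already done that work.
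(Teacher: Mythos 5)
Your argument for part \ref{lem:flatten:5} is correct and is essentially identical to the paper's own proof of that part: the same three-step chain of equivalences via part \ref{lem:flatten:4}, the definition of $\lift{(\mInv\mathcal{M})}$ (using that $\flatten{(T,T')}$ has arity divisible by $k$ and that $\sharp_k$ is injective), and part \ref{lem:flatten:1}. The subtlety you flag in your remarks is exactly the point the paper also makes explicit.

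However, the statement you were asked to prove is the entire five-part lemma, and your proposal establishes only part \ref{lem:flatten:5}, taking parts \ref{lem:flatten:1}--\ref{lem:flatten:4} as given. Part \ref{lem:flatten:1} is indeed immediate, and part \ref{lem:flatten:4} follows formally from part \ref{lem:flatten:3}, but parts \ref{lem:flatten:2} and \ref{lem:flatten:3} carry genuine content that your write-up does not supply. For part \ref{lem:flatten:3} one must verify the identity $\flatten{f^{\otimes k}(\vect{a}^1,\dots,\vect{a}^n)} = f(\flatten{\vect{a}^1},\dots,\flatten{\vect{a}^n})$ and then unwind the preservation relation $\preserves$ in both directions; this is the one place where the definition of direct powers actually interacts with relation pairs. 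For part \ref{lem:flatten:2} the inclusion $\mcr{Q} \subseteq \mcr{\flatten{\lift{\mcr{Q}}}}$ is not automatic, since $\lift{}$ discards all members of $\mcr{Q}$ whose arity is not divisible by $k$; the paper recovers an arbitrary $m$-ary $\varrho \in \mcr{Q}$ by forming the $km$-ary product $\varrho \times \dots \times \varrho$ (which survives lifting and flattening) and then projecting back onto the first $m$ coordinates. If your submission is meant to cover only part \ref{lem:flatten:5}, it is complete and matches the paper; if it is meant to cover the whole lemma, these two arguments are the missing pieces.
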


\begin{proof}
\ref{lem:flatten:1}
Obvious.

\ref{lem:flatten:2}
Let $\varrho \in \mcr{Q}$ be $m$\hyp{}ary.
Then $\underbrace{\varrho \times \dots \times \varrho}_k$ is a $km$\hyp{}ary member of $\mcr{Q}$.
By \ref{lem:flatten:1}, $\varrho \times \dots \times \varrho = \flatten{\lift{(\varrho \times \dots \times \varrho)}} \in \flatten{\lift{\mcr{Q}}}$;
hence $\varrho = \pr_{1,\dots,m}(\varrho \times \dots \times \varrho) \in \mcr{\flatten{\lift{\mcr{Q}}}}$,
so we have $\mcr{Q} \subseteq \mcr{\flatten{\lift{\mcr{Q}}}}$.
The converse inclusion holds because
\[
\flatten{\lift{\mcr{Q}}}
= \{ \varrho \in \mcr{Q} \mid \text{the arity of $\varrho$ is divisible by $k$} \}
\subseteq \mcr{Q};
\]
hence
$\mcr{\flatten{\lift{\mcr{Q}}}} \subseteq \mcr{\mcr{Q}} = \mcr{Q}$.

\ref{lem:flatten:3}
Assume $f$ is $n$\hyp{}ary and $(T,T')$ is $m$\hyp{}ary.
Since $f^{\otimes k}$ is defined as the coordinatewise application of $f$ to $k$\hyp{}tuples in $A^k$, it is easy to see that for any $\vect{a}^i \in (A^k)^m$ ($i = 1, \dots, n$), we have
$\flatten{f^{\otimes k}(\vect{a}^1, \dots, \vect{a}^n)} = f(\flatten{\vect{a}^1}, \dots, \flatten{\vect{a}^n})$.

Assume first that $f^{\otimes k} \preserves (T,T')$.
Let $\vect{b}^1, \dots, \vect{b}^n \in \flatten{T}$; then $\vect{b}^i = \flatten{\vect{a}^i}$ for some $\vect{a}^i \in T$ ($i = 1, \dots, n$).
Since $f^{\otimes k} \preserves (T,T')$, we have $f^{\otimes k}(\vect{a}^1, \dots, \vect{a}^n) \in T'$.
Consequently $f(\flatten{\vect{a}^1}, \dots, \flatten{\vect{a}^n}) = \flatten{f^{\otimes k}(\vect{a}^1, \dots, \vect{a}^n)} \in \flatten{T'}$.
Therefore $f \preserves \flatten{(T,T')}$.

Assume now that $f \preserves \flatten{(T,T')}$.
Let $\vect{a}^1, \dots, \vect{a}^n \in T$.
Then $\flatten{\vect{a}^1}, \dots, \linebreak \flatten{\vect{a}^n} \in \flatten{T}$, so
$\flatten{f^{\otimes k}(\vect{a}^1, \dots, \vect{a}^n)} = f(\flatten{\vect{a}^1}, \dots, \flatten{\vect{a}^n}) \in \flatten{T'}$.
Therefore \linebreak $f^{\otimes k}(\vect{a}^1, \dots, \vect{a}^n) \in T'$; hence $f^{\otimes k} \preserves (T,T')$.

\ref{lem:flatten:4}
Follows immediately from part \ref{lem:flatten:3}.

\ref{lem:flatten:5}
The following logical equivalences hold by parts \ref{lem:flatten:1} and \ref{lem:flatten:4} and by the fact that the arity of $\flatten{(T,T')}$ is a multiple of $k$:
\begin{align*}
(T,T') \in \mInv \mathcal{M}^{\otimes k}
& \iff
\flatten{(T,T')} \in \mInv \mathcal{M}
\\ & \iff
\lift{\flatten{(T,T')}} \in \lift{(\mInv \mathcal{M})}
\\ & \iff
(T,T') \in \lift{(\mInv \mathcal{M})}.
\qedhere
\end{align*}
\end{proof}

\begin{lemma}
\label{lem:flatQ}
For any $Q \subseteq \Relp(A^k)$, $\mcr{\flatten{Q}} = \mcr{\flatten{\mcr{Q}}}$.
\end{lemma}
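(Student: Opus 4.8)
The plan is to prove both inclusions, the trivial one being $\mcr{\flatten{Q}} \subseteq \mcr{\flatten{\mcr{Q}}}$: since $Q \subseteq \mcr{Q}$ and both $\flatten{\cdot}$ and $\mcr{\cdot}$ are monotone, this is immediate. For the converse it suffices to establish $\flatten{\mcr{Q}} \subseteq \mcr{\flatten{Q}}$; applying $\mcr{\cdot}$ to both sides and using idempotency then yields $\mcr{\flatten{\mcr{Q}}} \subseteq \mcr{\mcr{\flatten{Q}}} = \mcr{\flatten{Q}}$.

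To prove $\flatten{\mcr{Q}} \subseteq \mcr{\flatten{Q}}$ I would set $P := \{ (R,R') \in \Relp(A^k) \mid \flatten{(R,R')} \in \mcr{\flatten{Q}} \}$ and show that $P$ is a minor\hyp{}closed set of relation pairs on $A^k$ containing $Q$; since $\mcr{Q}$ is the smallest such set, this gives $\mcr{Q} \subseteq P$, which is exactly the desired inclusion. Containment $Q \subseteq P$ is clear. Minor\hyp{}closedness of $P$ reduces to a list of "commutation" facts between the flattening map and the generators of minor\hyp{}closure, combined each time with the fact that $\mcr{\flatten{Q}}$ is itself minor\hyp{}closed: (a) for a diagonal relation pair $(\delta^m_\varrho,\delta^m_\varrho)$ on $A^k$ one checks that $\flatten{(\delta^m_\varrho,\delta^m_\varrho)} = (\delta^{km}_{\varrho'},\delta^{km}_{\varrho'})$ on $A$, where $\varrho'$ is the equivalence on $\nset{km}$ relating positions $(i-1)k+\ell$ and $(j-1)k+\ell$ whenever $(i,j)\in\varrho$, hence it is a diagonal relation pair and lies in $\mcr{\flatten{Q}}$; (b) flattening commutes with $\times$, with relaxations, and with arbitrary intersections of relation pairs of equal arity, because $\flatten{\cdot}$ is a bijection on tuples that merely concatenates the $k$\hyp{}blocks of coordinates; (c) for the coordinate operations $\zeta$, $\tau$, $\pr$, $\wedge$, the flattening of the result is obtained from $\flatten{(R,R')}$ by a permutation, projection, and/or identification acting block\hyp{}wise on the $k$\hyp{}blocks, which in turn can be produced from $\flatten{(R,R')}$ by finitely many applications of $\zeta$, $\tau$, $\pr$, $\wedge$. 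In every case the outcome stays in the minor\hyp{}closed set $\mcr{\flatten{Q}}$, so $P$ is minor\hyp{}closed.

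The main obstacle is pure bookkeeping: making the identities in (a) and (c) precise, i.e.\ tracking how $\flatten{\cdot}$ reshuffles the coordinate set $\nset{m}$ (indexing copies of $A^k$) into $\nset{km}$ (indexing copies of $A$) in consecutive blocks of length $k$, and verifying that the induced permutations, projections, and identifications are realizable by the elementary operations; the rest is routine. (When all $A_s$ are finite there is a shorter route via the Galois connection: by Lemma~\ref{lem:flatten}\ref{lem:flatten:3}, any $f \in \mPol\flatten{Q}$ satisfies $f \preserves \flatten{(S,S')}$ for all $(S,S')\in Q$, hence $f^{\otimes k}\preserves (S,S')$ for all $(S,S')\in Q$, so $f^{\otimes k}\in\mPol Q$ and therefore $f^{\otimes k}$ preserves every member of $\mcr{Q}=\mInv\mPol Q$; applying Lemma~\ref{lem:flatten}\ref{lem:flatten:3} once more shows $f\preserves\flatten{(R,R')}$ for every $(R,R')\in\mcr{Q}$, i.e.\ $\flatten{\mcr{Q}}\subseteq\mInv\mPol\flatten{Q}=\mcr{\flatten{Q}}$ by Theorem~\ref{thm:mPolmInv-mInvmPol}\ref{mInvmPol}.)
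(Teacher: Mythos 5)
Your argument is correct and follows essentially the same route as the paper's proof: both reduce the nontrivial inclusion to showing $\flatten{\mcr{Q}} \subseteq \mcr{\flatten{Q}}$ by an induction over the generation of $\mcr{Q}$ (your set $P$ is just a repackaging of the paper's list of closure conditions), relying on the same commutation facts between flattening and the diagonals, the elementary operations, relaxations, and intersections, with the same identification of the flattened diagonal as a diagonal on $A$. Your parenthetical Galois-connection shortcut is also sound but, as you note, requires all $A_s$ finite, whereas the lemma carries no such hypothesis, so the combinatorial argument is the one that actually proves the statement.
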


\begin{proof}
Clearly $\flatten{Q} \subseteq \flatten{\mcr{Q}}$, so the inclusion $\mcr{\flatten{Q}} \subseteq \mcr{\flatten{\mcr{Q}}}$ holds.
For the converse inclusion $\mcr{\flatten{\mcr{Q}}} \subseteq \mcr{\flatten{Q}}$, it suffices to show that $\flatten{\mcr{Q}} \subseteq \mcr{\flatten{Q}}$.
For this, we need to prove the following:
if $(R,R'), (T,T') \in \mcr{Q}$ such that $\flatten{(R,R')}, \flatten{(T,T')} \in \mcr{\flatten{Q}}$, then also
$\flatten{\zeta(R,R')}$, $\flatten{\tau(R,R')}$, $\flatten{\pr(R,R')}$, \linebreak $\flatten{((R,R') \times (T,T'))}$, $\flatten{((R,R') \wedge (T,T'))}$ are in $\mcr{\flatten{Q}}$;
if $(R_i,R'_i) \in \mcr{Q}$ such that $\flatten{(R_i,R'_i)} \in \mcr{\flatten{Q}}$ for $i \in I$, then also
$\flatten{\bigcap_{i \in I}(R_i,R'_i)} \in \mcr{\flatten{Q}}$;
if $(R,R') \in \mcr{Q}$ such that $\flatten{(R,R')} \in \mcr{\flatten{Q}}$ and $(T,T')$ is a relaxation of $(R,R')$, then also $\flatten{(T,T')} \in \mcr{\flatten{Q}}$;
and for any diagonal relation pair $(\delta_\varrho^m, \delta_\varrho^m)$ on $A^k$, we have $\flatten{(\delta_\varrho^m, \delta_\varrho^m)} \in \mcr{\flatten{\mcr{Q}}}$.
Most of this is routine verification, and for illustration we only provide a detailed proof of $\flatten{\tau(R,R')} \in \mcr{\flatten{Q}}$.

So, assume $(R,R') \in \mcr{Q}$ is $m$\hyp{}ary and $\flatten{(R,R')} \in \mcr{\flatten{Q}}$.
We have
\begin{align*}
\flatten{\tau R}
& = \flatten{\{(\vect{a}_2, \vect{a}_1, \vect{a}_3, \dots, \vect{a}_m) \mid (\vect{a}_1, \dots, \vect{a}_m) \in R\}}
\\ & = \{\flatten{(\vect{a}_2, \vect{a}_1, \vect{a}_3, \dots, \vect{a}_m)} \mid (\vect{a}_1, \dots, \vect{a}_m) \in R\}
\\ & = \{\flatten{(\vect{a}_2, \vect{a}_1, \vect{a}_3, \dots, \vect{a}_m)} \mid \flatten{(\vect{a}_1, \dots, \vect{a}_m)} \in \flatten{R} \},
\end{align*}
which is a result of a permutation of the rows of $\flatten{R}$, which can be obtained from $\flatten{R}$ by a suitable application of the operations $\zeta$ and $\tau$.
An analogous statement with the same permutation of rows holds for $\flatten{\tau R'}$.
Consequently, $\flatten{\tau(R,R')} \in \mcr{\flatten{(R,R')}} \subseteq \mcr{\flatten{Q}}$.

As for the other statements, it is straightforward to verify that
\begin{gather*}
\flatten{\zeta(R,R')} = \zeta^k(\flatten{(R,R')}), \\
\flatten{\pr(R,R')} = \pr_{k+1, \dots, mk} \flatten{(R,R')}, \\
\flatten{((R,R') \times (T,T'))} = \flatten{(R,R')} \times \flatten{(T,T')}, \\
\flatten{((R,R') \wedge (T,T'))} = \flatten{(R,R')} \wedge \flatten{(T,T')}, \\
\flatten{\bigcap_{i \in I}(R_i,R'_i)} = \bigcap_{i \in I} \flatten{(R_i,R'_i)};
\end{gather*}
if $(T,T')$ is a relaxation of $(R,R')$ then $\flatten{(T,T')}$ is a relaxation of $\flatten{(R,R')}$;
and $\flatten{(\delta_\varrho^m, \delta_\varrho^m)} = (\delta_{\varrho'}^{km}, \delta_{\varrho'}^{km})$ where $\varrho'$ is the equivalence relation on $\{1, \dots, km\}$ given by $i \mathrel{\varrho'} j$ if and only if $\lceil i / k \rceil \mathrel{\varrho} \lceil j / k \rceil$ and $i \equiv j \pmod{k}$
($\lceil x \rceil$ stands for the least integer greater than or equal to $x$).
Therefore our desired conclusion follows.
\end{proof}


\section{Results}\label{sec:results}

Equipped with the tools introduced in the previous sections, we are now ready to develop our main results.
Note that if $A := (A_s)_{s \in S}$ is an $S$\hyp{}sorted set in which the components $A_s$ are all finite, then
$\mcr{Q_A} = \mInv \mPol Q_A$ 
by Theorem~\ref{thm:mPolmInv-mInvmPol}.
We will build our theory under this finiteness assumption.

\begin{definition}
Let $A$ and $B$ be $S$\hyp{}sorted sets, and let
$\mathcal{M}_1 \subseteq \Op(A)$ and $\mathcal{M}_2 \subseteq \Op(B)$
be minions.
A mapping $\lambda \colon \mathcal{M}_1 \to \mathcal{M}_2$ is a \emph{minion homomorphism} if
for every $f \in \mathcal{M}_1$, $\declaration(f) = \declaration(\lambda f)$, and for every minor $f^u_\sigma$, we have $(\lambda f)^u_\sigma = \lambda(f^u_\sigma)$.
\end{definition}

\begin{definition}
Let $\mathcal{M}_1, \mathcal{M}_2 \subseteq \Op(A)$.
We say that $\mathcal{M}_2$ is an \emph{extension} of $\mathcal{M}_1$ if $\mathcal{M}_1 \subseteq \mathcal{M}_2$.
\end{definition}

\begin{definition}
We define the operators $\sE$, $\sR$, $\sP$, $\sPfin$ as follows.
Let $F \subseteq \Op(A)$ for some $S$\hyp{}sorted set $A$, and
let $\mathcal{F}$ be a collection of sets of operations on some $S$\hyp{}sorted set.
Let $\sP F$ be the set of all direct powers of $F$, and let $\sPfin F$ be the set of all finite direct powers of $F$, i.e.,
$\sPfin F := \{F^{\otimes k} \mid k \in \IN \}$.
Let $\sR \mathcal{F}$ be the set of all reflections of members of $\mathcal{F}$.
Let $\sE \mathcal{F}$ be the set of all extensions of members of $\mathcal{F}$.

We can express the operators $\sR$ and $\sP$ in a more algebraic way as follows.
Given a set $\mathcal{M} \subseteq \Op(A)$, we can view $\mathcal{M}$ as an $S$\hyp{}sorted algebra
$\mathbf{A}_\mathcal{M}$ whose carrier is $A$ and fundamental operations are the members of $\mathcal{M}$, more precisely,
as the algebra
$\mathbf{A}_\mathcal{M} = (A; (f^{\mathbf{A}_\mathcal{M}})_{f \in \mathcal{M}})$ of type $\tau = (S, \mathcal{M}, \declaration_\mathcal{M})$ with $\declaration_\mathcal{M} \colon \mathcal{M} \to W(S) \times S$, $f \mapsto \declaration(f)$, and $f^{\mathbf{A}_\mathcal{M}} = f$ for every $f \in \mathcal{M}$.
On the other hand, given an $S$\hyp{}sorted algebra $\mathbf{A} = (A; (f^\mathbf{A})_{f \in I})$, let us denote by $F_\mathbf{A}$ the set of fundamental operations of $\mathbf{A}$, i.e., $F_\mathbf{A} := \{f^\mathbf{A} \mid f \in I\}$.
Obviously $F_{\mathbf{A}_\mathcal{M}} = \mathcal{M}$ for any $\mathcal{M} \subseteq \Op(A)$, but the algebras $\mathbf{A}_{F_\mathbf{A}}$ and $\mathbf{A}$ are not generally the same.

With the above notation, we have $F' \in \sP F$ if and only if there is some set $I$ such that $F' = F_{(\mathbf{A}_F)^I}$.
We have $F' \in \sR \mathcal{F}$ if and only if $F' = F_{(\mathbf{A}_F)_{(h,h')}}$ for some $F \in \mathcal{F}$ and for some reflection $(h,h')$.
\end{definition}

Recall that a reflection $(h,h')$ of $A$ into $B$ is a pair of maps $h = (h_s)_{s \in S_B}$, $h' = (h'_s)_{s \in S_B}$, $h_s \colon B_s \to A_s$, $h'_s \colon A_s \to B_s$ for all $s \in S_B$, i.e., for all $s \in S$ such that $B_s \neq \emptyset$ (see Definition~\ref{def:reflection}).

\begin{proposition}
\label{prop:E}
Let $A$ and $B$ be $S$\hyp{}sorted sets, and assume that the components $A_s$ and $B_s$ are all finite.
Let $\mathcal{M}_1 := \mPol Q_A$ and $\mathcal{M}_2 := \mPol Q_B$ for $Q_A \subseteq \Relp(A)$ and $Q_B \subseteq \Relp(B)$.
Then the following conditions are equivalent.
\begin{enumerate}[label={\textup{(\Roman*)}}]
\item\label{prop:E:1} $\mathcal{M}_2 \in \sE \mathcal{M}_1$.
\item\label{prop:E:2} $Q_B \subseteq \mcr{Q_A}$.
\end{enumerate}
\end{proposition}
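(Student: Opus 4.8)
The plan is to prove the equivalence by passing through the Galois connection $\mPol$--$\mInv$ and exploiting that, under the finiteness hypothesis, $\mcr{Q_A} = \mInv \mPol Q_A$ and similarly for $B$. Both directions should reduce to purely routine manipulations once this translation is set up.

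First I would handle the implication \ref{prop:E:2} $\Rightarrow$ \ref{prop:E:1}. Assume $Q_B \subseteq \mcr{Q_A}$. Applying the antitone map $\mPol$ and using Theorem~\ref{thm:mPolmInv-mInvmPol}, I get $\mPol Q_B \supseteq \mPol \mcr{Q_A} = \mPol \mInv \mPol Q_A = \mPol Q_A$, where the last equality is a basic property of any Galois connection. Hence $\mathcal{M}_1 = \mPol Q_A \subseteq \mPol Q_B = \mathcal{M}_2$, which is exactly the statement that $\mathcal{M}_2$ is an extension of $\mathcal{M}_1$, i.e.\ $\mathcal{M}_2 \in \sE \mathcal{M}_1$. (Here one should note that $\mathcal{M}_1$ and $\mathcal{M}_2$ are both sets of operations on the \emph{same} underlying carrier only implicitly; since extensions are defined for subsets of $\Op$ of a common $S$-sorted set, and $\mPol Q$ is always a minion by Theorem~\ref{thm:mPolmInv-mInvmPol}\ref{mPolmInv}, this causes no difficulty — both are minions and the condition $\mathcal{M}_1 \subseteq \mathcal{M}_2$ makes sense as stated.)

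For the converse \ref{prop:E:1} $\Rightarrow$ \ref{prop:E:2}, assume $\mathcal{M}_1 \subseteq \mathcal{M}_2$, i.e.\ $\mPol Q_A \subseteq \mPol Q_B$. Applying the antitone map $\mInv$ gives $\mInv \mPol Q_A \supseteq \mInv \mPol Q_B$, and by Theorem~\ref{thm:mPolmInv-mInvmPol}\ref{mInvmPol} these are $\mcr{Q_A} \supseteq \mcr{Q_B} \supseteq Q_B$, which is precisely \ref{prop:E:2}. This completes the proof.

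The argument is essentially bookkeeping with the Galois closure operators, so there is no serious obstacle; the only point that requires a moment's care is making sure the finiteness assumption is invoked exactly where needed — namely in the identities $\mcr{Q_A} = \mInv \mPol Q_A$ and $\mcr{Q_B} = \mInv \mPol Q_B$ coming from Theorem~\ref{thm:mPolmInv-mInvmPol}, which is stated only for $S$-sorted sets with all components finite. I would state both directions compactly, citing Theorem~\ref{thm:mPolmInv-mInvmPol} and the standard Galois-connection facts $\mPol \mInv \mPol = \mPol$ and the antitonicity of $\mPol$, $\mInv$, and the monotonicity of their composites.
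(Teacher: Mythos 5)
Your proof is correct and follows essentially the same route as the paper's: both directions are the antitonicity of $\mPol$ and $\mInv$ combined with the identifications $\mcr{Q_A} = \mInv\mPol Q_A$ and $\mcr{Q_B} = \mInv\mPol Q_B$ from Theorem~\ref{thm:mPolmInv-mInvmPol}. Your version just spells out the two implications separately where the paper compresses them into one chain of equivalences.
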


\begin{proof}
The condition $\mathcal{M}_2 \in \sE \mathcal{M}_1$ is equivalent to $\mathcal{M}_1 \subseteq \mathcal{M}_2$.
Since the operators of a Galois connection are order\hyp{}reversing, the latter condition is equivalent to
$Q_B \subseteq \mcr{Q_B} = \mInv \mathcal{M}_2 \subseteq \mInv \mathcal{M}_1 = \mcr{Q_A}$.
\end{proof}

\begin{lemma}
\label{lem:mInvmPolQBcorref}
Let $Q_B \subseteq \Relp(B)$, and let $(h,h')$ be a reflection of $A$ into $B$.
Then $Q_B \subseteq \mInv (\mPol Q_B^{(h,h')})_{(h,h')}$.
\end{lemma}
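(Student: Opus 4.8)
The plan is to unwind the definitions and verify the inclusion pointwise: I want to show that every relation pair in $Q_B$ is an invariant relation pair of every operation belonging to $(\mPol Q_B^{(h,h')})_{(h,h')}$. So fix $(T,T') \in Q_B$ and let $g \in (\mPol Q_B^{(h,h')})_{(h,h')}$. Since the reflected set $(\mPol Q_B^{(h,h')})_{(h,h')}$ is well defined (it figures in the statement), the reflection $f_{(h,h')}$ is defined for each $f\in\mPol Q_B^{(h,h')}$, so by the definition of the $(h,h')$-reflection of a set of operations we have $g = f_{(h,h')}$ for some $f \in \mPol Q_B^{(h,h')}$, with $\declaration(f)$ reasonable in $B$.

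First I would observe that $f$ preserves the coreflection of $(T,T')$. By definition of the coreflection of a set, the pair $(T,T')^{(h,h')} = (h(T), {h'}^{-1}(T'))$ belongs to $Q_B^{(h,h')}$, hence $f \preserves (T,T')^{(h,h')}$. Applying Proposition~\ref{prop:refl-mInv}\ref{f-fhh} to $f$ and the relation pair $(T,T')^{(h,h')}$ yields $f_{(h,h')} \preserves \bigl((T,T')^{(h,h')}\bigr)_{(h,h')}$, and unwinding the definition of the reflection of a relation pair gives $\bigl((T,T')^{(h,h')}\bigr)_{(h,h')} = \bigl(h^{-1}(h(T)),\, h'({h'}^{-1}(T'))\bigr)$.

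The crucial observation is now that $(T,T')$ is a relaxation of $\bigl(h^{-1}(h(T)),\, h'({h'}^{-1}(T'))\bigr)$. Componentwise, at a sort $s$ with $B_s \neq \emptyset$ this is the pair of standard inclusions $T_s \subseteq h_s^{-1}(h_s(T_s))$ (a tuple lies in the preimage of its own image under $h_s$) and $h'_s\bigl({h'_s}^{-1}(T'_s)\bigr) \subseteq T'_s$ (the image of a preimage is contained in the original relation); at a sort $s$ with $B_s = \emptyset$ both relation pairs have empty $s$-components (for relation pairs of positive arity), so the relaxation condition holds there as well. Granting this, $g = f_{(h,h')} \preserves (T,T')$ follows from Lemma~\ref{lem:mPolRelaxQ} --- or rather from the elementary fact proved there, that a polymorphism of a relation pair is also a polymorphism of every relaxation of it. Since $(T,T')$ and $g$ were arbitrary, we conclude $Q_B \subseteq \mInv(\mPol Q_B^{(h,h')})_{(h,h')}$.

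I expect the only delicate point to be the componentwise relaxation check, specifically the bookkeeping for sorts that are inessential in $B$ and for nullary relation pairs, where one has to distinguish carefully between the components $\emptyset$ and $\{\emptyset\}$; apart from that the argument is just the two-step chain of Proposition~\ref{prop:refl-mInv}\ref{f-fhh} followed by relaxation-invariance of polymorphisms. An alternative route goes through Proposition~\ref{prop:Relax-mInv}, rewriting $\mInv F_{(h,h')} = \Relax\bigl((\mInv F)_{(h,h')}\bigr) \supseteq \Relax\bigl((Q_B^{(h,h')})_{(h,h')}\bigr) \ni (T,T')$ with $F := \mPol Q_B^{(h,h')}$; however, this uses Theorem~\ref{thm:mPolmInv-mInvmPol}\ref{mInvmPol} and hence the finiteness of the components of $A$, which the present statement does not assume, so the direct argument above is preferable.
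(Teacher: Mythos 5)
Your argument is correct and follows essentially the same route as the paper: both start from the Galois-connection fact that $Q_B^{(h,h')} \subseteq \mInv \mPol Q_B^{(h,h')}$, i.e., that $(T,T')^{(h,h')} \in \mInv F$ for $F := \mPol Q_B^{(h,h')}$ and every $(T,T') \in Q_B$. The only difference is that the paper then concludes in one step by citing Proposition~\ref{prop:refl-mInv}\ref{mInvFhh}, whereas you rederive the needed direction of that statement from Proposition~\ref{prop:refl-mInv}\ref{f-fhh} together with the relaxation observation $T \subseteq h^{-1}(h(T))$ and $h'(h'^{-1}(T')) \subseteq T'$; both versions correctly avoid any finiteness hypothesis, which the lemma does not assume.
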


\begin{proof}
The\hspace{-0.1pt} inclusion\hspace{-0.1pt}
$Q_B^{(h,h')} \subseteq \mInv \mPol Q_B^{(h,h')}$
holds\hspace{-0.1pt} by\hspace{-0.1pt} the\hspace{-0.1pt} basic\hspace{-0.1pt} properties\hspace{-0.1pt} of Galois connections.
Proposition~\ref{prop:refl-mInv}\ref{mInvFhh} implies
$Q_B \subseteq \mInv (\mPol Q_B^{(h,h')})_{(h,h')}$.
\end{proof}

\begin{proposition}
\label{prop:R}
Let $A$ and $B$ be $S$\hyp{}sorted sets, and assume that the components $A_s$ and $B_s$ are all finite.
Let $\mathcal{M}_1 := \mPol Q_A$ and $\mathcal{M}_2 := \mPol Q_B$ for $Q_A \subseteq \Relp(A)$ and $Q_B \subseteq \Relp(B)$.
The following conditions are equivalent.
\begin{enumerate}[label={\textup{(\Roman*)}}]
\item\label{prop:R:1} $\mathcal{M}_2 \in \sR \mathcal{M}_1$.
\item\label{prop:R:2} There exists a reflection $(h,h')$ of $A$ into $B$ such that
\begin{enumerate}[label={\textup{(\roman*)}}]
\item\label{prop:R:2:i} $Q_B^{(h,h')} \subseteq \mcr{Q_A}$ and
\item\label{prop:R:2:ii} $\mcr{Q_A}_{(h,h')} \subseteq \mcr{Q_B}$.
\end{enumerate}
\end{enumerate}
\end{proposition}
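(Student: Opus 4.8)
The plan is to prove the two implications $\ref{prop:R:1} \Rightarrow \ref{prop:R:2}$ and $\ref{prop:R:2} \Rightarrow \ref{prop:R:1}$ separately, using the characterization of $\sR$ via reflections of operations together with Proposition~\ref{prop:refl-mInv}, Proposition~\ref{prop:Relax-mInv}, Lemma~\ref{lem:mPolRelaxQ}, and Lemma~\ref{lem:mInvmPolQBcorref}, and the Galois correspondence of Theorem~\ref{thm:mPolmInv-mInvmPol}. Throughout, the finiteness assumption is what lets us pass freely between $\mathcal{M}_i$ and $\mcr{Q}$ via $\mInv \mPol Q = \mcr{Q}$ and $\mPol \mInv \mathcal{M} = \mathcal{M}$.

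For $\ref{prop:R:1} \Rightarrow \ref{prop:R:2}$: assume $\mathcal{M}_2 \in \sR \mathcal{M}_1$, so $\mathcal{M}_2 = (\mathcal{M}_1)_{(h,h')}$ for some reflection $(h,h')$ of $A$ into $B$ (using $\mathcal{M}_1 = F_{\mathbf{A}_{\mathcal{M}_1}}$ and the algebraic reformulation of $\sR$; note $\declaration(f)$ is reasonable in $B$ for all $f \in \mathcal{M}_1$ since $(\mathcal{M}_1)_{(h,h')}$ is defined). Then $\mInv \mathcal{M}_2 = \mInv (\mathcal{M}_1)_{(h,h')} = \Relax((\mInv \mathcal{M}_1)_{(h,h')})$ by Proposition~\ref{prop:Relax-mInv}. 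For \ref{prop:R:2:i}: since $Q_B \subseteq \mInv \mPol Q_B = \mInv \mathcal{M}_2 = \mInv (\mathcal{M}_1)_{(h,h')}$, Proposition~\ref{prop:refl-mInv}\ref{mInvFhh} gives $(T,T')^{(h,h')} \in \mInv \mathcal{M}_1 = \mcr{Q_A}$ for every $(T,T') \in Q_B$, i.e.\ $Q_B^{(h,h')} \subseteq \mcr{Q_A}$. For \ref{prop:R:2:ii}: $\mcr{Q_A}_{(h,h')} = (\mInv \mathcal{M}_1)_{(h,h')} \subseteq \Relax((\mInv \mathcal{M}_1)_{(h,h')}) = \mInv \mathcal{M}_2 = \mcr{Q_B}$.

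For $\ref{prop:R:2} \Rightarrow \ref{prop:R:1}$: given a reflection $(h,h')$ satisfying \ref{prop:R:2:i} and \ref{prop:R:2:ii}, I want to show $\mathcal{M}_2 = (\mathcal{M}_1)_{(h,h')}$. Write $\mathcal{N} := (\mathcal{M}_1)_{(h,h')}$; by Proposition~\ref{prop:refl-mc} this is a minion, and by Proposition~\ref{prop:Relax-mInv}, $\mInv \mathcal{N} = \Relax((\mInv \mathcal{M}_1)_{(h,h')}) = \Relax((\mcr{Q_A})_{(h,h')})$. Using \ref{prop:R:2:ii} and the fact that $\mcr{Q_B}$ is minor-closed hence relaxation-closed, $\mInv \mathcal{N} = \Relax((\mcr{Q_A})_{(h,h')}) \subseteq \Relax(\mcr{Q_B}) = \mcr{Q_B} = \mInv \mathcal{M}_2$, so $\mathcal{M}_2 = \mPol \mInv \mathcal{M}_2 \subseteq \mPol \mInv \mathcal{N} = \mathcal{N}$. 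For the reverse inclusion $\mathcal{N} \subseteq \mathcal{M}_2$, equivalently (by Galois) $\mInv \mathcal{M}_2 \subseteq \mInv \mathcal{N}$, i.e.\ $\mcr{Q_B} \subseteq \Relax((\mcr{Q_A})_{(h,h')})$. Here I use \ref{prop:R:2:i}: from $Q_B^{(h,h')} \subseteq \mcr{Q_A}$ one gets $\mPol \mcr{Q_A} \subseteq \mPol Q_B^{(h,h')}$, hence by Lemma~\ref{lem:mInvmPolQBcorref} (applied with $Q_B$) and monotonicity $Q_B \subseteq \mInv(\mPol Q_B^{(h,h')})_{(h,h')} \subseteq \mInv(\mPol \mcr{Q_A})_{(h,h')} = \mInv(\mathcal{M}_1)_{(h,h')} = \mInv \mathcal{N}$; since $\mInv \mathcal{N}$ is minor-closed, $\mcr{Q_B} = \mInv \mPol Q_B \subseteq \mInv \mathcal{N}$, which is what we needed. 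Combining the two inclusions yields $\mathcal{M}_2 = \mathcal{N} = (\mathcal{M}_1)_{(h,h')} \in \sR \mathcal{M}_1$.

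The main obstacle I anticipate is the bookkeeping around relaxations: condition \ref{prop:R:2:ii} only gives $\mcr{Q_A}_{(h,h')} \subseteq \mcr{Q_B}$, not equality with $\mInv \mathcal{M}_2$, and the correct bridge in both directions is Proposition~\ref{prop:Relax-mInv}, which says $\mInv$ of a reflection of a minion is the relaxation-closure of the reflection of its invariant pairs. One has to be careful that the hypotheses of Proposition~\ref{prop:Relax-mInv} and Proposition~\ref{prop:refl-mInv} (declarations reasonable in $B$) are met, which they are precisely when $\sR \mathcal{M}_1$-membership or the reflection $(\mathcal{M}_1)_{(h,h')}$ is in play. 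The rest is a diagram chase through the Galois connection, routine once the relaxation subtlety is handled cleanly.
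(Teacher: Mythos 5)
Your proof is correct and follows essentially the same route as the paper's: both directions are handled with Proposition~\ref{prop:refl-mInv}, Proposition~\ref{prop:Relax-mInv}, Lemma~\ref{lem:mInvmPolQBcorref}, and the Galois correspondence of Theorem~\ref{thm:mPolmInv-mInvmPol}, with only cosmetic differences (e.g., you derive $\mathcal{N}\subseteq\mathcal{M}_2$ by passing to $\mInv\mathcal{M}_2\subseteq\mInv\mathcal{N}$, while the paper applies $\mPol$ to $Q_B\subseteq\mInv(\mathcal{M}_1)_{(h,h')}$ directly). Your handling of the relaxation subtlety matches the paper's use of $\mPol\mcr{Q_A}_{(h,h')}=\mPol\Relax\mcr{Q_A}_{(h,h')}$.
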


\begin{proof}
\ref{prop:R:1}~$\Rightarrow$~\ref{prop:R:2}:
Assume that $\mathcal{M}_2 \in \sR \mathcal{M}_1$.
Then there exists a reflection $(h,h')$ of $A$ into $B$ such that $\mathcal{M}_2 = (\mathcal{M}_1)_{(h,h')}$.
By Theorem~\ref{thm:mPolmInv-mInvmPol}\ref{mInvmPol} and Proposition~\ref{prop:refl-mInv}\ref{mInvFhh}, we have
\[
\mcr{Q_B}
= \mInv \mathcal{M}_2
= \mInv (\mathcal{M}_1)_{(h,h')}
= \{ (T,T') \mid (T,T')^{(h,h')} \in \mInv \mathcal{M}_1 \},
\]
which implies
\[
Q_B^{(h,h')}
\subseteq \mcr{Q_B}^{(h,h')}
\subseteq \mInv \mathcal{M}_1
= \mcr{Q_A}.
\]
Furthermore, by Proposition~\ref{prop:Relax-mInv},
\begin{align*}
\mcr{Q_B}
&= \mInv (\mathcal{M}_1)_{(h,h')}
= \Relax(\mInv \mathcal{M}_1)_{(h,h')}
\\
&= \Relax(\mcr{Q_A}_{(h,h')})
\supseteq \mcr{Q_A}_{(h,h')}.
\end{align*}

\ref{prop:R:2}~$\Rightarrow$~\ref{prop:R:1}:
Assume that there exist a reflection $(h,h')$ of $A$ into $B$ such that
$Q_B^{(h,h')} \subseteq \mcr{Q_A}$ and $\mcr{Q_A}_{(h,h')} \subseteq \mcr{Q_B}$.
Then
\begin{align*}
\mathcal{M}_2
& = \mPol Q_B
= \mPol \mcr{Q_B}
\subseteq \mPol \mcr{Q_A}_{(h,h')}
= \mPol \Relax \mcr{Q_A}_{(h,h')}
\\ & = \mPol \mInv (\mathcal{M}_1)_{(h,h')}
= (\mathcal{M}_1)_{(h,h')},
\end{align*}
where we have applied
Proposition~\ref{prop:refl-mc},
Theorem~\ref{thm:mPolmInv-mInvmPol},
Proposition~\ref{prop:Relax-mInv},
Lemma \ref{lem:mPolRelaxQ},
and basic properties of Galois connections together with the inclusion $\mcr{Q_A}_{(h,h')} \subseteq \mcr{Q_B}$.

Conversely, from the inclusion $Q_B^{(h,h')} \subseteq \mcr{Q_A}$ we get
\[
\mPol Q_B^{(h,h')}
\supseteq \mPol \mcr{Q_A}
= \mPol Q_A
= \mathcal{M}_1,
\]
which implies
\[
(\mPol Q_B^{(h,h')})_{(h,h')}
\supseteq
(\mathcal{M}_1)_{(h,h')}.
\]
Thus by Lemma~\ref{lem:mInvmPolQBcorref} and basic properties of Galois connections,
\[
Q_B
\subseteq
\mInv (\mPol Q_B^{(h,h')})_{(h,h')}
\subseteq
\mInv (\mathcal{M}_1)_{(h,h')},
\]
and, consequently,
\[
\mathcal{M}_2
= \mPol Q_B
\supseteq \mPol \mInv (\mathcal{M}_1)_{(h,h')}
= (\mathcal{M}_1)_{(h,h')}.
\]
We conclude that $\mathcal{M}_2 = (\mathcal{M}_1)_{(h,h')}$; hence $\mathcal{M}_2 \in \sR \mathcal{M}_1$.
\end{proof}

\begin{proposition}
\label{prop:P}
Let $A$ and $B$ be $S$\hyp{}sorted sets, and assume that the components $A_s$ and $B_s$ are all finite.
Let $\mathcal{M}_1 := \mPol Q_A$ and $\mathcal{M}_2 := \mPol Q_B$ for $Q_A \subseteq \Relp(A)$ and $Q_B \subseteq \Relp(B)$.
Then the following conditions are equivalent.
\begin{enumerate}[label={\textup{(\Roman*)}}]
\item\label{prop:P:1} $\mathcal{M}_2 \in \sPfin \mathcal{M}_1$.
\item\label{prop:P:2} There exists an integer $k \in \IN_{+}$ such that $B = A^k$ and $\lift{\mcr{Q_A}} = \mcr{Q_B}$.
\end{enumerate}
Moreover, these conditions imply the following:
\begin{enumerate}[label={\textup{(\Roman*)}}, resume]
\item\label{prop:P:3} There exists an integer $k \in \IN_{+}$ such that $B = A^k$ and $\mcr{\flatten{Q_B}} = \mcr{Q_A}$.
\end{enumerate}
\end{proposition}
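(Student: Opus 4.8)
The plan is to prove the equivalence \ref{prop:P:1}$\Leftrightarrow$\ref{prop:P:2} first and then derive \ref{prop:P:3} from \ref{prop:P:2}. For \ref{prop:P:1}$\Rightarrow$\ref{prop:P:2}: if $\mathcal{M}_2 \in \sPfin \mathcal{M}_1$, then by definition there is some $k \in \IN_{+}$ with $B = A^k$ and $\mathcal{M}_2 = \mathcal{M}_1^{\otimes k}$. Then Theorem~\ref{thm:mPolmInv-mInvmPol}\ref{mInvmPol} gives $\mcr{Q_B} = \mInv \mathcal{M}_2 = \mInv \mathcal{M}_1^{\otimes k}$, and Lemma~\ref{lem:flatten}\ref{lem:flatten:5} identifies this with $\lift{(\mInv \mathcal{M}_1)} = \lift{\mcr{Q_A}}$, using that $\mcr{Q_A} = \mInv \mathcal{M}_1$ again by Theorem~\ref{thm:mPolmInv-mInvmPol}. (One must note the edge case $k=0$: the statement requires $k \in \IN_+$, so $\sPfin$ with $k=0$ would need separate handling or should be excluded; I would assume the intended reading allows restricting to positive $k$, as $A^0$ collapses the sorts, and the definition of $\lift{}$ presupposes $k \geq 1$.)

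For \ref{prop:P:2}$\Rightarrow$\ref{prop:P:1}: assuming $B = A^k$ and $\lift{\mcr{Q_A}} = \mcr{Q_B}$, I want to conclude $\mathcal{M}_2 = \mathcal{M}_1^{\otimes k}$. Compute $\mathcal{M}_2 = \mPol Q_B = \mPol \mcr{Q_B} = \mPol \lift{\mcr{Q_A}}$. By Lemma~\ref{lem:flatten}\ref{lem:flatten:5}, $\lift{(\mInv \mathcal{M}_1)} = \mInv \mathcal{M}_1^{\otimes k}$, and since $\mcr{Q_A} = \mInv \mathcal{M}_1$ we get $\lift{\mcr{Q_A}} = \mInv \mathcal{M}_1^{\otimes k}$; hence $\mathcal{M}_2 = \mPol \mInv \mathcal{M}_1^{\otimes k}$, which equals $\mathcal{M}_1^{\otimes k}$ because $\mathcal{M}_1^{\otimes k}$ is a minion (Proposition~\ref{prop:power-mc}) and so is Galois-closed by Theorem~\ref{thm:mPolmInv-mInvmPol}\ref{mPolmInv}. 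Therefore $\mathcal{M}_2 \in \sPfin \mathcal{M}_1$.

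For the final implication \ref{prop:P:2}$\Rightarrow$\ref{prop:P:3}: take the same $k$ and $B = A^k$; I claim $\mcr{\flatten{Q_B}} = \mcr{Q_A}$. Starting from $\mcr{Q_B} = \lift{\mcr{Q_A}}$, apply $\flatten{}$ and take minor-closures: $\mcr{\flatten{Q_B}} = \mcr{\flatten{\mcr{Q_B}}}$ by Lemma~\ref{lem:flatQ}, and $\mcr{\flatten{\mcr{Q_B}}} = \mcr{\flatten{\lift{\mcr{Q_A}}}}$, which is $\mcr{Q_A}$ by Lemma~\ref{lem:flatten}\ref{lem:flatten:2}. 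This chain should go through cleanly.

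I expect the main obstacle to be bookkeeping around the degenerate case $k = 0$ and making sure the divisibility condition in the definition of $\lift{Q}$ interacts correctly with the minor-closures — specifically, verifying that $\lift{\mcr{Q_A}}$ (which only picks out the relation pairs of arity divisible by $k$) really is already minor-closed, so that $\mcr{\lift{\mcr{Q_A}}} = \lift{\mcr{Q_A}}$ and the equation $\lift{\mcr{Q_A}} = \mcr{Q_B}$ is consistent. This is implicitly guaranteed by Lemma~\ref{lem:flatten}\ref{lem:flatten:5}, since $\mInv$ of any set of operations is minor-closed by Theorem~\ref{thm:mPolmInv-mInvmPol}\ref{mInvmPol}; but I would state this explicitly to avoid a gap. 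Everything else is a routine assembly of the cited lemmas.
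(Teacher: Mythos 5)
Your proposal is correct and follows essentially the same route as the paper's proof: both directions of \ref{prop:P:1}$\Leftrightarrow$\ref{prop:P:2} via Lemma~\ref{lem:flatten}\ref{lem:flatten:5} and the Galois closure from Theorem~\ref{thm:mPolmInv-mInvmPol}, and \ref{prop:P:2}$\Rightarrow$\ref{prop:P:3} via Lemmas~\ref{lem:flatten}\ref{lem:flatten:2} and \ref{lem:flatQ}. Your side remarks on the $k=0$ case and on $\lift{\mcr{Q_A}}$ being minor\hyp{}closed are sensible but do not change the argument, which the paper carries out in the same way.
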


\begin{proof}
\ref{prop:P:1}~$\Rightarrow$~\ref{prop:P:2}:
Assume $\mathcal{M}_2 \in \sPfin \mathcal{M}_1$.
Then there exists an integer $k \in \IN_{+}$ such that $\mathcal{M}_2 = \mathcal{M}_1^{\otimes k}$; hence $B = A^k$.
By Lemma~\ref{lem:flatten}\ref{lem:flatten:5} we have
\begin{align*}
\mcr{Q_B}
& = \mInv \mPol Q_B
= \mInv \mathcal{M}_2
= \mInv \mathcal{M}_1^{\otimes k}
\\ &
= \lift{(\mInv \mathcal{M}_1)}
= \lift{(\mInv \mPol Q_A)}
= \lift{\mcr{Q_A}}.
\end{align*}

\ref{prop:P:2}~$\Rightarrow$~\ref{prop:P:1}:
Assume there exists an integer $k \in \IN_{+}$ such that $B = A^k$ and $\lift{\mcr{Q_A}} = \mcr{Q_B}$.
By Lemma~\ref{lem:flatten}\ref{lem:flatten:5}, we have
\begin{align*}
\mInv \mathcal{M}_2
& = \mInv \mPol Q_B
= \mcr{Q_B}
= \lift{\mcr{Q_A}}
\\ & = \lift{(\mInv \mPol Q_A)}
= \lift{(\mInv \mathcal{M}_1)}
= \mInv \mathcal{M}_1^{\otimes k}.
\end{align*}
Consequently, $\mathcal{M}_2 = \mPol \mInv \mathcal{M}_2 = \mPol \mInv \mathcal{M}_1^{\otimes k} = \mathcal{M}_1^{\otimes k}$, i.e., $\mathcal{M}_2 \in \sPfin \mathcal{M}_1$.

\ref{prop:P:2}~$\Rightarrow$~\ref{prop:P:3}:
By Lemmas~\ref{lem:flatten}\ref{lem:flatten:2} and \ref{lem:flatQ} we have
$\mcr{Q_A}
= \mcr{\flatten{\lift{\mcr{Q_A}}}}
= \mcr{\flatten{\mcr{Q_B}}}
= \mcr{\flatten{Q_B}}$.
\end{proof}

\begin{lemma}
\label{lem:Qhh}
For any set $Q \subseteq \Relp(A)$ and for any reflection $(h,h')$ of $A$ into $B$ it holds that $(Q_{(h,h')})^{(h,h')} \subseteq \Relax Q \subseteq \mcr{Q}$.
\end{lemma}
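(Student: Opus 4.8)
The plan is to handle the two inclusions separately. The inclusion $\Relax Q \subseteq \mcr{Q}$ will be immediate: $\mcr{Q}$ is the smallest minor\hyp{}closed set of relation pairs containing $Q$, and every minor\hyp{}closed set is by definition closed under relaxations, so $\mcr{Q} \supseteq \Relax Q$. The substance of the lemma is therefore the first inclusion, $(Q_{(h,h')})^{(h,h')} \subseteq \Relax Q$.

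For the first inclusion it suffices to prove that for every $(R,R') \in Q$ the relation pair $\bigl((R,R')_{(h,h')}\bigr)^{(h,h')}$ is a relaxation of $(R,R')$ itself, since $\Relax Q$ is by definition the set of all relaxations of members of $Q$. So I would fix an $m$\hyp{}ary $(R,R') \in Q$ and unwind the two constructions. With $(T,T') := (R,R')_{(h,h')}$ the definitions give $T_s = h_s^{-1}(R_s)$, $T'_s = h'_s(R'_s)$ for $s \in \esssorts{B}$ and $T_s = T'_s = \emptyset$ otherwise; applying the coreflection yields $\bigl((R,R')_{(h,h')}\bigr)^{(h,h')} = (U,U')$ with $U_s = h_s\bigl(h_s^{-1}(R_s)\bigr)$, $U'_s = {h'_s}^{-1}\bigl(h'_s(R'_s)\bigr)$ for $s \in \esssorts{B}$ and $U_s = U'_s = \emptyset$ otherwise. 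Since reflection and coreflection preserve arity, $(U,U')$ is again $m$\hyp{}ary, so it makes sense to ask whether it relaxes $(R,R')$.

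What is left is the componentwise comparison $U \subseteq R$ and $U' \supseteq R'$. For $s \in \esssorts{B}$ this is the elementary set\hyp{}theoretic fact (applied to tuples) that the image of a preimage is contained in the original set while the preimage of an image contains it: $U_s = h_s(h_s^{-1}(R_s)) \subseteq R_s$ and $U'_s = {h'_s}^{-1}(h'_s(R'_s)) \supseteq R'_s$. I expect the only point needing genuine care to be the bookkeeping at the sorts $s$ with $B_s = \emptyset$: there both reflection and coreflection collapse the relevant components to $\emptyset$, so $U_s = \emptyset \subseteq R_s$ is clear, and one has to dispose of the second\hyp{}coordinate comparison $U'_s \supseteq R'_s$ at those sorts. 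Granting that, $(U,U')$ is a relaxation of $(R,R')$ and hence belongs to $\Relax Q$; together with the first paragraph this yields $(Q_{(h,h')})^{(h,h')} \subseteq \Relax Q \subseteq \mcr{Q}$.
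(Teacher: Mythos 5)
Your proof is correct and takes essentially the same route as the paper's: pick $(R,R') \in Q$, unwind reflection followed by coreflection, and conclude from $h_s(h_s^{-1}(R_s)) \subseteq R_s$ and ${h'_s}^{-1}(h'_s(R'_s)) \supseteq R'_s$ that the result is a relaxation of $(R,R')$, hence lies in $\Relax Q \subseteq \mcr{Q}$. The bookkeeping at sorts with $B_s = \emptyset$ that you explicitly flag is passed over in silence in the paper's own one\hyp{}line proof, so you are not missing any argument that the paper supplies.
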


\begin{proof}
An element $(T,T') \in (Q_{(h,h')})^{(h,h')}$ is of the form $((R,R')_{(h,h')})^{(h,h')}$ for some $(R,R') \in Q$.
Since $T = h(h^{-1}(R)) \subseteq R$ and $T' = h'^{-1}(h'(R')) \supseteq R'$,
$(T,T')$ is a relaxation of $(R,R')$, so
$(T,T') \in \Relax Q \subseteq \mcr{Q}$.
\end{proof}

\begin{proposition}
\label{prop:ER}
Let $A$ and $B$ be $S$\hyp{}sorted sets, and assume that the components $A_s$ and $B_s$ are all finite.
Let $\mathcal{M}_1 := \mPol Q_A$ and $\mathcal{M}_2 := \mPol Q_B$ for $Q_A \subseteq \Relp(A)$ and $Q_B \subseteq \Relp(B)$.
The following conditions are equivalent.
\begin{enumerate}[label={\textup{(\Roman*)}}]
\item\label{prop:ER:1} $\mathcal{M}_2 \in \mathop{{\sE}{\sR}} \mathcal{M}_1$.
\item\label{prop:ER:2} There exists a reflection $(h,h')$ of $A$ into $B$ such that $Q_B^{(h,h')} \subseteq \mcr{Q_A}$.
\end{enumerate}
\end{proposition}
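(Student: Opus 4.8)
The plan is to combine Proposition~\ref{prop:E} (the relational characterization of $\sE$) with Proposition~\ref{prop:R} (the relational characterization of $\sR$), using the fact that $\sE\sR$ applied to a single minion factors through an intermediate minion which is a reflection of $\mathcal{M}_1$ and of which $\mathcal{M}_2$ is an extension. The key observation is that the conjunction of the two conditions in Proposition~\ref{prop:R}\ref{prop:R:2}, together with the extension inclusion from Proposition~\ref{prop:E}, collapses to the single condition $Q_B^{(h,h')} \subseteq \mcr{Q_A}$, because the other conditions become automatic once we only require an extension rather than equality of minions.

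First I would prove \ref{prop:ER:1}~$\Rightarrow$~\ref{prop:ER:2}. Assume $\mathcal{M}_2 \in \mathop{{\sE}{\sR}}\mathcal{M}_1$, so there is a minion $\mathcal{M}$ with $\mathcal{M} \in \sR\mathcal{M}_1$ and $\mathcal{M}_2 \in \sE\mathcal{M}$. Since $\mathcal{M}$ is defined on $B$ (reflections into $B$ land in operations on $B$; note $\mathcal{M} \in \sE\mathcal{M}$ requires $\mathcal{M}, \mathcal{M}_2$ to be sets of operations on the same $S$-sorted set), write $\mathcal{M} = \mPol Q$ for $Q := \mInv\mathcal{M}$. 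By Proposition~\ref{prop:R} applied to $\mathcal{M}_1 = \mPol Q_A$ and $\mathcal{M} = \mPol Q$, there is a reflection $(h,h')$ of $A$ into $B$ with $Q^{(h,h')} \subseteq \mcr{Q_A}$; and by Proposition~\ref{prop:E} applied to $\mathcal{M}$ and $\mathcal{M}_2$, we get $Q_B \subseteq \mcr{Q}$, hence $Q_B^{(h,h')} \subseteq (\mcr{Q})^{(h,h')} \subseteq \mcr{Q_A}$, where the last inclusion uses that coreflection is monotone and that $(\mcr{Q})^{(h,h')} \subseteq \mInv\mathcal{M}_1 = \mcr{Q_A}$ by Proposition~\ref{prop:refl-mInv}\ref{mInvFhh} (as in the proof of Proposition~\ref{prop:R}). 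Actually it is cleaner to observe directly: $Q_B^{(h,h')} \subseteq (\mcr{Q_B})^{(h,h')} = (\mInv\mathcal{M}_2)^{(h,h')} \subseteq (\mInv\mathcal{M})^{(h,h')} \subseteq \mInv(\mathcal{M}_1) = \mcr{Q_A}$, using $\mathcal{M}_1 \subseteq \mathcal{M}$'s reflection structure via Proposition~\ref{prop:refl-mInv}\ref{mInvFhh}.

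For the converse \ref{prop:ER:2}~$\Rightarrow$~\ref{prop:ER:1}, suppose there is a reflection $(h,h')$ of $A$ into $B$ with $Q_B^{(h,h')} \subseteq \mcr{Q_A}$. Set $\mathcal{M} := (\mathcal{M}_1)_{(h,h')}$, which is a minion by Proposition~\ref{prop:refl-mc} and lies in $\sR\mathcal{M}_1$; by Theorem~\ref{thm:mPolmInv-mInvmPol} and Proposition~\ref{prop:Relax-mInv}, $\mInv\mathcal{M} = \Relax((\mcr{Q_A})_{(h,h')})$. It remains to show $\mathcal{M}_2 \in \sE\mathcal{M}$, i.e.\ $\mathcal{M} \subseteq \mathcal{M}_2$, equivalently (by Proposition~\ref{prop:E}, or directly by order-reversal of the Galois connection) $Q_B \subseteq \mInv\mathcal{M} = \mcr{Q}$ for $Q := \mInv\mathcal{M}$. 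This is exactly the content of Lemma~\ref{lem:mInvmPolQBcorref} combined with the hypothesis: from $Q_B^{(h,h')} \subseteq \mcr{Q_A}$ we get $\mPol(Q_B^{(h,h')}) \supseteq \mPol\mcr{Q_A} = \mathcal{M}_1$, hence $(\mPol(Q_B^{(h,h')}))_{(h,h')} \supseteq (\mathcal{M}_1)_{(h,h')} = \mathcal{M}$, and then by Lemma~\ref{lem:mInvmPolQBcorref}, $Q_B \subseteq \mInv(\mPol(Q_B^{(h,h')}))_{(h,h')} \subseteq \mInv\mathcal{M}$, as desired. Applying $\mPol$ yields $\mathcal{M}_2 = \mPol Q_B \supseteq \mPol\mInv\mathcal{M} = \mathcal{M}$, so $\mathcal{M}_2 \in \sE\mathcal{M} \subseteq \mathop{{\sE}{\sR}}\mathcal{M}_1$.

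The main obstacle I anticipate is purely bookkeeping: being careful that the intermediate minion $\mathcal{M}$ produced by $\sR$ is genuinely a set of operations on $B$ (so that $\sE$ can be applied), and tracking which relational facts are monotonicity of $(\cdot)^{(h,h')}$ versus the sharper Galois-closure statements in Proposition~\ref{prop:refl-mInv}\ref{mInvFhh}. Both directions really just reshuffle the proof of Proposition~\ref{prop:R}, dropping the second condition \ref{prop:R:2:ii} (which was forcing equality rather than containment), so there is no genuinely new difficulty beyond assembling the pieces correctly.
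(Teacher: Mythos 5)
Your proof is correct and takes essentially the same route as the paper's: the forward direction is identical (pass to the intermediate reflection $\mathcal{M}'_2$, apply Proposition~\ref{prop:R} to it, and use monotonicity of coreflection together with $\mInv\mathcal{M}_2 \subseteq \mInv\mathcal{M}'_2$). In the converse direction you identify the intermediate minion directly as $(\mathcal{M}_1)_{(h,h')}$ and verify $(\mathcal{M}_1)_{(h,h')} \subseteq \mathcal{M}_2$ via Lemma~\ref{lem:mInvmPolQBcorref} and the Galois connection, whereas the paper presents the same minion relationally as $\mPol(Q_B \cup \mcr{Q_A}_{(h,h')})$ so as to invoke Proposition~\ref{prop:R} as a black box; the underlying computation is the same and the difference is only in packaging.
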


\begin{proof}
\ref{prop:ER:1}~$\Rightarrow$~\ref{prop:ER:2}:
Assume $\mathcal{M}_2 \in \mathop{{\sE}{\sR}} \mathcal{M}_1$.
Then there exists a minion $\mathcal{M}'_2$ such that $\mathcal{M}'_2 \subseteq \mathcal{M}_2$ and $\mathcal{M}'_2 \in \sR \mathcal{M}_1$.
Let $Q_{B'} := \mInv \mathcal{M}'_2$; obviously $\mathcal{M}'_2 = \mPol Q_{B'}$.
By Proposition~\ref{prop:R}, there exists a reflection $(h,h')$ of $A$ into $B$ such that $Q_{B'}^{(h,h')} \subseteq \mcr{Q_A}$.
Since $\mathcal{M}'_2 \subseteq \mathcal{M}_2$, we have $Q_B \subseteq \mInv \mathcal{M}_2 \subseteq \mInv \mathcal{M}'_2 = Q_{B'}$.
By taking $(h,h')$\hyp{}coreflections, we obtain $Q_B^{(h,h')} \subseteq Q_{B'}^{(h,h')} \subseteq \mcr{Q_A}$.

\ref{prop:ER:2}~$\Rightarrow$~\ref{prop:ER:1}:
Assume there exists a reflection $(h,h')$ of $A$ into $B$ such that $Q_B^{(h,h')} \subseteq \mcr{Q_A}$.
Let $Q_{B'} := Q_B \cup \mcr{Q_A}_{(h,h')}$ and $\mathcal{M}'_2 := \mPol Q_{B'}$.
Since $Q_B \subseteq Q_{B'}$ by definition, we have $\mathcal{M}'_2 = \mPol Q_{B'} \subseteq \mPol Q_B = \mathcal{M}_2$, so $\mathcal{M}_2 \in \sE \mathcal{M}'_2$.
It remains to show that $\mathcal{M}'_2 \in \sR \mathcal{M}_1$.
For this, it suffices to show that $Q_A$ and $Q_{B'}$, together with the reflection $(h,h')$, satisfy the conditions of Proposition~\ref{prop:R}\ref{prop:R:2}.
Condition~\ref{prop:R:2:i} holds because $Q_{B'}^{(h,h')} = Q_B^{(h,h')} \cup (\mcr{Q_A}_{(h,h')})^{(h,h')} \subseteq \mcr{Q_A} \cup \mcr{Q_A} = \mcr{Q_A}$ by Lemma~\ref{lem:Qhh}.
Condition~\ref{prop:R:2:ii} holds because $\mcr{Q_A}_{(h,h')} \subseteq Q_{B'}$ by definition, so $\mcr{Q_A}_{(h,h')} \subseteq \mcr{Q_{B'}}$.
\end{proof}

\begin{remark}
From this point on, we have to make the small technical assumption that the $S$\hyp{}sorted sets $A$ and $B$ satisfy $S_B \subseteq S_A$.
This is due to the fact that reflections of $A$ into $B$ exist if and only if $S_B \subseteq S_A$ (see Definition~\ref{def:reflection}) but minion homomorphisms may exist between minions on $A$ and $B$ regardless of the essential sorts.
For example, consider
$S = \{1\}$, $A = \{0, 1, 2\}$, $B = \emptyset$; hence $S_A = \{1\} = S$, $S_B = \emptyset$.
Let $\mathcal{M}_1 := \{c_2^{(n)} \mid n \in \IN_{+}\} \subseteq \Op(A)$ (constant operations of all arities taking value $2$),
$\mathcal{M}_2 := \{\emptyset^{(n)} \mid n \in \IN_{+}\} \subseteq \Op(B)$ (empty functions of all arities).
The sets $\mathcal{M}_1$ and $\mathcal{M}_2$ are minions, and the maps
$\lambda \colon \mathcal{M}_1 \to \mathcal{M}_2$, $c_2^{(n)} \mapsto \emptyset^{(n)}$, and
$\mu \colon \mathcal{M}_2 \to \mathcal{M}_1$, $\emptyset^{(n)} \mapsto c_2^{(n)}$,
are minion homomorphisms.
\end{remark}

\begin{lemma}
\label{lem:mh-ModmId}
Let $A$ and $B$ be $S$\hyp{}sorted sets such that $S_B \subseteq S_A$.
Let $\mathcal{M}_1 \subseteq \Op(A)$ and $\mathcal{M}_2 \subseteq \Op(B)$ be minions,
There exists a surjective minion homomorphism of $\mathcal{M}_1$ onto $\mathcal{M}_2$ if and only if there exists an algebra $\mathbf{B} \in \mathop{{\sR}{\sP}} \mathbf{A}_{\mathcal{M}_1}$ with $F_\mathbf{B} = \mathcal{M}_2$.
\end{lemma}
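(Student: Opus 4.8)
The plan is to prove the two implications by passing through minor identities, using Theorem~\ref{thm:Mod-mId-K} ($\Mod \mId \mathcal{K} = \sR\sP \mathcal{K}$) as the bridge between the operators $\sR\sP$ and satisfaction of identities. Two elementary facts will be used on both sides: forming a minor commutes with direct powers, i.e.\ $(f^{\otimes I})^u_\sigma = (f^u_\sigma)^{\otimes I}$ for an arbitrary index set $I$ (the same computation as in the proof of Proposition~\ref{prop:power-mc}, which does not use finiteness of $I$), and forming a minor commutes with reflections, i.e.\ $(g_{(h,h')})^u_\sigma = (g^u_\sigma)_{(h,h')}$ (a direct unwinding of the definitions, as in the proof of Proposition~\ref{prop:refl-mc}).

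For the implication from left to right, assume $\lambda \colon \mathcal{M}_1 \to \mathcal{M}_2$ is a surjective minion homomorphism. Since $\lambda$ preserves declarations, we may define $\mathbf{B}$ as the algebra of the same type $\tau := (S, \mathcal{M}_1, \declaration_{\mathcal{M}_1})$ as $\mathbf{A}_{\mathcal{M}_1}$, with carrier $B$ and $f^\mathbf{B} := \lambda(f)$ for every $f \in \mathcal{M}_1$; surjectivity of $\lambda$ is exactly what gives $F_\mathbf{B} = \mathcal{M}_2$ (and not just $F_\mathbf{B} \subseteq \mathcal{M}_2$), which is why the statement uses $\sR\sP$ rather than $\sE\sR\sP$. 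By Theorem~\ref{thm:Mod-mId-K} it now suffices to show $\mathbf{B} \in \Mod \mId \mathbf{A}_{\mathcal{M}_1}$. So let $f_\sigma \approx_{S'} g_\pi$ be a minor identity of type $\tau$ that holds in $\mathbf{A}_{\mathcal{M}_1}$, and fix a feasible arity $u$ for both $f_\sigma$ and $g_\pi$ with letter set $S'$ (if there is none, the identity holds in every algebra). Unwinding the definition of the induced term operation shows that $(f_\sigma^u)^{\mathbf{A}_{\mathcal{M}_1}}$ is precisely the operation minor $f^u_\nu$ of $f$ in the sense of Definition~\ref{def:minor}, for the position map $\nu$ determined by $\sigma$ and $u$, and likewise $(g_\pi^u)^{\mathbf{A}_{\mathcal{M}_1}} = g^u_{\nu'}$; hence the hypothesis says exactly that $f^u_\nu = g^u_{\nu'}$ in $\mathcal{M}_1$. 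Applying $\lambda$ and using that a minion homomorphism commutes with operation minors, $(f^\mathbf{B})^u_\nu = \lambda(f^u_\nu) = \lambda(g^u_{\nu'}) = (g^\mathbf{B})^u_{\nu'}$, so $\mathbf{B}$ satisfies $f_\sigma \approx_{S'} g_\pi$ as well.

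For the converse, assume $\mathbf{B} \in \sR\sP \mathbf{A}_{\mathcal{M}_1}$ with $F_\mathbf{B} = \mathcal{M}_2$. Then $\mathbf{B}$ is an $(h,h')$-reflection of a power $(\mathbf{A}_{\mathcal{M}_1})^I$ for some index set $I$ and some reflection $(h,h')$ of $A^I$ into $B$, so $f^\mathbf{B} = (f^{\otimes I})_{(h,h')}$ for every $f \in \mathcal{M}_1$. Define $\lambda \colon \mathcal{M}_1 \to \mathcal{M}_2$ by $\lambda(f) := f^\mathbf{B}$; its image is $F_\mathbf{B} = \mathcal{M}_2$, so $\lambda$ is onto. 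Declarations are unchanged by powers and reflections, so $\declaration(\lambda f) = \declaration(f)$, and by the two commutation facts recalled at the start,
\[
(\lambda f)^u_\sigma = \bigl((f^{\otimes I})_{(h,h')}\bigr)^u_\sigma = \bigl((f^{\otimes I})^u_\sigma\bigr)_{(h,h')} = \bigl((f^u_\sigma)^{\otimes I}\bigr)_{(h,h')} = \lambda(f^u_\sigma),
\]
so $\lambda$ is a minion homomorphism.

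I expect the delicate point to be the bookkeeping in the left-to-right direction: reconciling the definition of ``$\mathbf{A}_{\mathcal{M}_1}$ satisfies $f_\sigma \approx_{S'} g_\pi$'', which is phrased via term operations $(f_\sigma^u)^{\mathbf{A}_{\mathcal{M}_1}}$ and the position map $\nu$ depending on $\sigma$ and $u$, with the bare equation $f^u_\nu = g^u_{\nu'}$ of operation minors to which the minion-homomorphism condition applies directly, and in checking the degenerate cases (empty sorts, absence of a common feasible arity, well-definedness of $\mathbf{B}$ as an algebra over $B$) that are compatible with the standing assumption $S_B \subseteq S_A$. The right-to-left direction, by contrast, is essentially formal once the commutation of minors with powers and reflections is in hand.
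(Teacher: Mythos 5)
Your proof is correct, and the forward implication is essentially the paper's own argument: build $\mathbf{B}$ over the type $(S,\mathcal{M}_1,\declaration_{\mathcal{M}_1})$ with $f^\mathbf{B}:=\lambda(f)$, then invoke Theorem~\ref{thm:Mod-mId-K} and check that minor identities transfer because $\lambda$ commutes with operation minors. Where you genuinely diverge is the converse. The paper stays inside the identity framework: it again uses $\mathop{{\sR}{\sP}}\mathbf{A}_{\mathcal{M}_1}=\Mod\mId\mathbf{A}_{\mathcal{M}_1}$ and, for each minor $f^u_\sigma$, observes that $\mathbf{A}_{\mathcal{M}_1}$ satisfies the tautological minor identity $f_\sigma\approx_{S'}(f^u_\sigma)_\iota$ (with $\iota$ the identity-like variable assignment), so $\mathbf{B}$ satisfies it too, which is exactly the statement $(\lambda f)^u_\sigma=\lambda(f^u_\sigma)$. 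You instead unfold the definition of $\mathop{{\sR}{\sP}}$ concretely, writing $f^\mathbf{B}=(f^{\otimes I})_{(h,h')}$ and using that minor formation commutes with arbitrary powers and with reflections. Both are sound; your route has the merit of needing only the ``soundness'' computations (the same ones underlying Propositions~\ref{prop:power-mc} and \ref{prop:refl-mc}) rather than the equality of Theorem~\ref{thm:Mod-mId-K} in that direction, while the paper's route avoids ever verifying the commutation of minors with reflections explicitly (that fact is only imported by citation) and keeps both halves of the proof uniformly phrased in terms of satisfaction of minor identities. The one point you should make explicit if you write this up is the verification $(g_{(h,h')})^u_\sigma=(g^u_\sigma)_{(h,h')}$ including the degenerate cases $B_w=\emptyset$ or $B_u=\emptyset$, where both sides must be recognized as the empty map of the appropriate declaration; this is routine but is not actually proved anywhere in the present paper.
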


\begin{proof}
For notational simplicity, write $\mathbf{A} := \mathbf{A}_{\mathcal{M}_1}$.
Assume first that $\lambda \colon \mathcal{M}_1 \to \mathcal{M}_2$ is a surjective minion homomorphism.
Let $\mathbf{B} = (B, (f^\mathbf{B})_{f \in \mathcal{M}_1})$ be the algebra of the same type as $\mathbf{A}_{\mathcal{M}_1}$ with fundamental operations $f^\mathbf{B} = \lambda f$ for every $f \in \mathcal{M}_1$.
By definition, $F_\mathbf{B} = \mathcal{M}_2$.
It remains to show that $\mathbf{B} \in \mathop{{\sR}{\sP}} \mathbf{A}$, which is equivalent to $\mathbf{B} \in \Mod \mId \mathbf{A}$ by Theorem~\ref{thm:Mod-mId-K}.
Let $f_\sigma \approx_{S'} g_\pi$ be a minor identity satisfied by $\mathbf{A}$.
This means that for any $u \in W(S)$ with $\range u = S'$ that is a feasible arity for both $f_\sigma$ and $g_\pi$, we have $(f^\mathbf{A})_\sigma^u = (g^\mathbf{A})_\pi^u$.
Since $\lambda$ is a minion homomorphism, $(f^\mathbf{B})_\sigma^u = (\lambda f^\mathbf{A})_\sigma^u = \lambda ((f^\mathbf{A})_\sigma^u) = \lambda ((g^\mathbf{A})_\pi^u) = (\lambda g^\mathbf{A})_\pi^u = (g^\mathbf{B})_\pi^u$.
Hence $\mathbf{B}$ satisfies $f_\sigma \approx_{S'} g_\pi$.
We conclude that $\mathbf{B}$ satisfies every identity satisfied by $\mathbf{A}$, that is, $\mathbf{B} \in \Mod \mId \mathbf{A}$.

Assume now that there exists an algebra $\mathbf{B} \in \mathop{{\sR}{\sP}} \mathbf{A} = \Mod \mId \mathbf{A}$ such that $F_\mathbf{B} = \mathcal{M}_2$.
Define the map $\lambda \colon \mathcal{M}_1 \to \mathcal{M}_2$ by the rule $f \mapsto f^\mathbf{B}$ for all $f \in \mathcal{M}_1$.
The map $\lambda$ is surjective onto $F_\mathbf{B} = \mathcal{M}_2$ by definition.
We claim that $\lambda$ is a minion homomorphism.
We have $\declaration(f) = \declaration(\lambda f)$ for all $f \in \mathcal{M}_1$ by definition.
Let now $f \in \mathcal{M}_1$, and let $u = u_1 \dots u_m$ and $\sigma$ be such that the minor $f_\sigma^u$ is defined.
Then $f_\sigma^u \in \mathcal{M}_1$ and $f = f^\mathbf{A}$ and $(f^\mathbf{A})_\sigma^u = f_\sigma^u = ((f_\sigma^u)^\mathbf{A})_\iota^u$, where $\iota \colon \nset{m} \to S$, $i \mapsto u_i$, so $\mathbf{A}$ clearly satisfies the identity $f_\sigma \approx_{S'} (f_\sigma^u)_\iota$, where $S'$ is the union of the sets of input sorts of $f$ and $f^u_\sigma$.
By our assumption, also $\mathbf{B}$ satisfies this identity, so
$(f^\mathbf{B})_\sigma^u = ((f_\sigma^u)^\mathbf{B})_\iota^u$.
Hence
$(\lambda f)_\sigma^u
= (\lambda f^\mathbf{A})_\sigma^u
= (f^\mathbf{B})_\sigma^u
= ((f_\sigma^u)^\mathbf{B})_\iota^u
= (f_\sigma^u)^\mathbf{B}
= \lambda ((f_\sigma^u)^\mathbf{A})
= \lambda (f_\sigma^u)$,
and we conclude that $\lambda$ is a surjective minion homomorphism.
\end{proof}

\begin{theorem}
\label{thm:main-RP}
Let $A$ and $B$ be $S$\hyp{}sorted sets such that $S_B \subseteq S_A$.
Let $\mathcal{M}_1 := \mPol Q_A$ and $\mathcal{M}_2 := \mPol Q_B$ for $Q_A \subseteq \Relp(A)$ and $Q_B \subseteq \Relp(B)$.
\begin{enumerate}[label={\textup{(\alph*)}}]
\item\label{main-RP:arb} The following conditions are equivalent.
\begin{enumerate}[label={\textup{(\roman*)}}]
\item\label{main-RP:RP} $\mathcal{M}_2 \in \mathop{{\sR}{\sP}} \mathcal{M}_1$.
\item\label{main-RP:hom} There exists a surjective minion homomorphism $\lambda \colon \mathcal{M}_1 \to \mathcal{M}_2$.
\end{enumerate}
\item\label{main-RP:fin} Assume that the components $A_s$ and $B_s$ of $A$ and $B$ are all finite. Then the following conditions are equivalent.
\begin{enumerate}[label={\textup{(\roman*)}}]
\item\label{main-RP:RPfin} $\mathcal{M}_2 \in \mathop{{\sR}{\sPfin}} \mathcal{M}_1$.
\item\label{main-RP:mcc} There exist an integer $k \in \IN_+$ and a reflection $(h,h')$ of $A^k$ into $B$ such that
\begin{enumerate}[label={\textup{(\arabic*)}}]
\item\label{main-RP:mcc:1} $\flatten{Q_B^{(h,h')}} \subseteq \mcr{Q_A}$ and
\item\label{main-RP:mcc:2} $(\lift{\mcr{Q_A}})_{(h,h')} \subseteq \mcr{Q_B}$.
\end{enumerate}
\end{enumerate}
\end{enumerate}
\end{theorem}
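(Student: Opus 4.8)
The plan is to prove the two parts separately, reducing part~\ref{main-RP:arb} to Lemma~\ref{lem:mh-ModmId} and part~\ref{main-RP:fin} to Proposition~\ref{prop:R} applied over a finite power of~$A$.

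For part~\ref{main-RP:arb}, I would first record the bookkeeping equivalence that $\mathcal{M}_2 \in \mathop{{\sR}{\sP}} \mathcal{M}_1$ holds if and only if there exists an algebra $\mathbf{B} \in \mathop{{\sR}{\sP}} \mathbf{A}_{\mathcal{M}_1}$ with $F_\mathbf{B} = \mathcal{M}_2$. This is immediate from the algebraic descriptions of the operators $\sP$ and $\sR$ given with their definition: $\mathcal{M}_2 \in \mathop{{\sR}{\sP}} \mathcal{M}_1$ says $\mathcal{M}_2 = (\mathcal{M}_1^{\otimes I})_{(h,h')}$ for some index set $I$ and some reflection $(h,h')$ of $A^I$ into $B$ (which exists, since $\esssorts{B} \subseteq \esssorts{A}$ and passing to a power does not shrink the set of essential sorts), and then $\mathbf{B} := ((\mathbf{A}_{\mathcal{M}_1})^I)_{(h,h')}$ belongs to $\mathop{{\sR}{\sP}} \mathbf{A}_{\mathcal{M}_1}$ and has $F_\mathbf{B} = (F_{(\mathbf{A}_{\mathcal{M}_1})^I})_{(h,h')} = (\mathcal{M}_1^{\otimes I})_{(h,h')} = \mathcal{M}_2$; conversely, every algebra in $\mathop{{\sR}{\sP}} \mathbf{A}_{\mathcal{M}_1}$ has exactly this form, with its set of fundamental operations computed by the same formula. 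Once this is in hand, the equivalence of \ref{main-RP:RP} and \ref{main-RP:hom} is precisely the statement of Lemma~\ref{lem:mh-ModmId}.

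For part~\ref{main-RP:fin}, I would unwind $\mathcal{M}_2 \in \mathop{{\sR}{\sPfin}} \mathcal{M}_1$ to: $\mathcal{M}_2 \in \sR(\mathcal{M}_1^{\otimes k})$ for some $k \in \IN$. The degenerate value $k = 0$ should reduce to $k = 1$ (a reflection $(h,h')$ of $A$ into $B$ in which every $h'_s$ is constant sends $\mathcal{M}_1$ to precisely the set of constant operations that $\mathcal{M}_1^{\otimes 0}$ reflected into $B$ yields), so we may take $k \in \IN_{+}$. Next I would check that Proposition~\ref{prop:R} applies to the $S$-sorted sets $A^k$ and $B$ with the relation-pair sets $\lift{\mcr{Q_A}}$ on $A^k$ and $Q_B$ on $B$: the components $A_s^k$ and $B_s$ are finite; $\mathcal{M}_1^{\otimes k}$ is a minion by Proposition~\ref{prop:power-mc}; and by Lemma~\ref{lem:flatten}\ref{lem:flatten:5} together with Theorem~\ref{thm:mPolmInv-mInvmPol}\ref{mInvmPol} we have $\mInv \mathcal{M}_1^{\otimes k} = \lift{(\mInv \mathcal{M}_1)} = \lift{\mcr{Q_A}}$, hence $\mathcal{M}_1^{\otimes k} = \mPol(\lift{\mcr{Q_A}})$ by Theorem~\ref{thm:mPolmInv-mInvmPol}\ref{mPolmInv} and $\lift{\mcr{Q_A}}$ is minor-closed, being an $\mInv$-image. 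Proposition~\ref{prop:R} then yields that $\mathcal{M}_2 \in \sR(\mathcal{M}_1^{\otimes k})$ if and only if there is a reflection $(h,h')$ of $A^k$ into $B$ with $Q_B^{(h,h')} \subseteq \lift{\mcr{Q_A}}$ and $(\lift{\mcr{Q_A}})_{(h,h')} \subseteq \mcr{Q_B}$.

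It then remains to identify these two conditions with \ref{main-RP:mcc:1} and \ref{main-RP:mcc:2}. The second is literally \ref{main-RP:mcc:2}. For the first, since $Q_B^{(h,h')}$ consists of relation pairs on $A^k$ and $\lift{\mcr{Q_A}} = \mInv \mathcal{M}_1^{\otimes k}$, Lemma~\ref{lem:flatten}\ref{lem:flatten:4} shows that a pair $(T,T') \in \Relp(A^k)$ lies in $\lift{\mcr{Q_A}}$ exactly when $\flatten{(T,T')} \in \mInv \mathcal{M}_1 = \mcr{Q_A}$; hence $Q_B^{(h,h')} \subseteq \lift{\mcr{Q_A}}$ is equivalent to $\flatten{Q_B^{(h,h')}} \subseteq \mcr{Q_A}$, i.e.\ to \ref{main-RP:mcc:1}. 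Ranging over $k \in \IN_{+}$ completes part~\ref{main-RP:fin}. I expect the only real friction to be organizational: keeping straight the two translation dictionaries --- between a minion $\mathcal{M}$ and its canonical algebra $\mathbf{A}_\mathcal{M}$ for part~\ref{main-RP:arb}, and between $\Relp(A^k)$ and the arity-divisible-by-$k$ fragment of $\Relp(A)$ via $\lift{}$ and $\flatten{}$ for part~\ref{main-RP:fin} --- and disposing cleanly of the degenerate power $k = 0$; all the substantive content is already packaged in Lemma~\ref{lem:mh-ModmId}, Proposition~\ref{prop:R}, and Lemma~\ref{lem:flatten}.
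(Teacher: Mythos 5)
Your proposal is correct and follows essentially the same route as the paper: part~\ref{main-RP:arb} is reduced to Lemma~\ref{lem:mh-ModmId} via the dictionary between minions and their canonical algebras, and part~\ref{main-RP:fin} is obtained by applying Proposition~\ref{prop:R} to $\mathcal{M}_1^{\otimes k} = \mPol(\lift{\mcr{Q_A}})$ and translating its two conditions through the lifting/flattening correspondence of Lemma~\ref{lem:flatten}. The only (harmless) differences are organizational: you fix $Q_{A^k} := \lift{\mcr{Q_A}}$ in both directions instead of routing the forward implication through Proposition~\ref{prop:P}, and you use Lemma~\ref{lem:flatten}\ref{lem:flatten:4} rather than \ref{lem:flatten:1}--\ref{lem:flatten:2} for the equivalence of $Q_B^{(h,h')} \subseteq \lift{\mcr{Q_A}}$ with condition~\ref{main-RP:mcc:1}.
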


\begin{proof}
\ref{main-RP:arb}
This is Lemma~\ref{lem:mh-ModmId}.

\ref{main-RP:fin}
\ref{main-RP:RPfin}~$\Rightarrow$~\ref{main-RP:mcc}:
Assume $\mathcal{M}_2 \in \mathop{{\sR}{\sPfin}} \mathcal{M}_1$.
Then there exists $\mathcal{M}'_1 \in \sPfin \mathcal{M}_1$ such that $\mathcal{M}_2 \in \sR \mathcal{M}'_1$.
By Proposition~\ref{prop:P}, there exists $k \in \IN_{+}$ such that $\mathcal{M}'_1 = \mPol Q_{A^k}$ for some $Q_{A^k} \subseteq \Relp(A^k)$ satisfying $\lift{\mcr{Q_A}} = \mcr{Q_{A^k}}$;
this together with Lemma~\ref{lem:flatten}\ref{lem:flatten:2} implies $\mcr{Q_A} = \mcr{\flatten{\lift{\mcr{Q_A}}}} = \mcr{\flatten{\mcr{Q_{A^k}}}}$.
By Proposition~\ref{prop:R}, there exists a reflection $(h,h')$ of $A^k$ into $B$ such that $Q_B^{(h,h')} \subseteq \mcr{Q_{A^k}}$ and $\mcr{Q_{A^k}}_{(h,h')} \subseteq \mcr{Q_B}$.
Putting these equalities and inclusions together, we get
\begin{gather*}
\flatten{Q_B^{(h,h')}}
\subseteq \flatten{\mcr{Q_{A^k}}}
\subseteq \mcr{\flatten{\mcr{Q_{A^k}}}}
= \mcr{Q_A},
\\
(\lift{\mcr{Q_A}})_{(h,h')}
= \mcr{Q_{A^k}}_{(h,h')}
\subseteq \mcr{Q_B}.
\end{gather*}

\ref{main-RP:mcc}~$\Rightarrow$~\ref{main-RP:RPfin}:
Assume that there exist an integer $k \in \IN_+$ and a reflection $(h,h')$ of $A^k$ into $B$ such that
$\flatten{Q_B^{(h,h')}} \subseteq \mcr{Q_A}$ and
$(\lift{\mcr{Q_A}})_{(h,h')} \subseteq \mcr{Q_B}$.
Let $Q_{A^k} := \lift{\mcr{Q_A}}$.
By Lemma~\ref{lem:flatten}\ref{lem:flatten:5} we have $Q_{A^k} = \lift{(\mInv \mathcal{M}_1)} = \mInv \mathcal{M}_1^{\otimes k}$, so $Q_{A^k}$ is minor\hyp{}closed by Theorem~\ref{thm:mPolmInv-mInvmPol}, i.e., $\mcr{Q_{A^k}} = Q_{A^k}$.
By Proposition~\ref{prop:power-mc}, $\mathcal{M}_1^{\otimes k}$ is a minion, so $\mathcal{M}_1^{\otimes k} = \mPol \mInv \mathcal{M}_1^{\otimes} = \mPol Q_{A^k}$.
By the above equalitites and inclusions and Lemma~\ref{lem:flatten}\ref{lem:flatten:1}
we have
\begin{gather*}
Q_B^{(h,h')}
= \lift{\flatten{Q_B^{(h,h')}}}
\subseteq \lift{\mcr{Q_A}}
= Q_{A^k}
= \mcr{Q_{A^k}},
\\
\mcr{Q_{A^k}}_{(h,h')}
= (\lift{\mcr{Q_A}})_{(h,h')}
\subseteq \mcr{Q_B}.
\end{gather*}
Now Proposition~\ref{prop:R} yields $\mathcal{M}_2 \in \sR \mathcal{M}_1^{\otimes k} \subseteq \mathop{{\sR}{\sPfin}} \mathcal{M}_1$.
\end{proof}

As announced in the introduction, the following theorem can be considered as the multisorted analogue of the wonderland theorem \cite[Theorem~1.3]{wonderland}.

\begin{theorem}
\label{thm:main-ERP}
Let $A$ and $B$ be $S$\hyp{}sorted sets such that $S_B \subseteq S_A$.
Let $\mathcal{M}_1 := \mPol Q_A$ and $\mathcal{M}_2 := \mPol Q_B$ for $Q_A \subseteq \Relp(A)$ and $Q_B \subseteq \Relp(B)$.
\begin{enumerate}[label={\textup{(\alph*)}}]
\item\label{main-ERP:arb} The following conditions are equivalent.
\begin{enumerate}[label={\textup{(\roman*)}}]
\item\label{main-ERP:ERP} $\mathcal{M}_2 \in \mathop{{\sE}{\sR}{\sP}} \mathcal{M}_1$.
\item\label{main-ERP:hom} There exists a minion homomorphism $\lambda \colon \mathcal{M}_1 \to \mathcal{M}_2$.
\end{enumerate}
\item\label{main-ERP:fin} Assume that the components $A_s$ and $B_s$ of $A$ and $B$ are all finite. Then the following conditions are equivalent.
\begin{enumerate}[label={\textup{(\roman*)}}]
\item\label{main-ERP:ERPfin} $\mathcal{M}_2 \in \mathop{{\sE}{\sR}{\sPfin}} \mathcal{M}_1$.
\item\label{main-ERP:mcc} There exist an integer $k \in \IN_+$ and
a reflection $(h,h')$ of $A^k$ into $B$
such that $\flatten{Q_B^{(h,h')}} \subseteq \mcr{Q_A}$.
\end{enumerate}
\end{enumerate}
\end{theorem}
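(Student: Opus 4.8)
The plan is to reduce Theorem~\ref{thm:main-ERP} to the results already established for the individual operators, exactly in the spirit of how Theorem~\ref{thm:main-RP} was deduced. For part~\ref{main-ERP:arb}, I would first observe that $\mathop{{\sE}{\sR}{\sP}}\mathcal{M}_1$ is the class of extensions of members of $\mathop{{\sR}{\sP}}\mathcal{M}_1$, so $\mathcal{M}_2\in\mathop{{\sE}{\sR}{\sP}}\mathcal{M}_1$ iff there is a minion $\mathcal{M}'_2$ with $\mathcal{M}'_2\subseteq\mathcal{M}_2$ and $\mathcal{M}'_2\in\mathop{{\sR}{\sP}}\mathcal{M}_1$. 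By Theorem~\ref{thm:main-RP}\ref{main-RP:arb}, the latter says there is a surjective minion homomorphism $\lambda\colon\mathcal{M}_1\to\mathcal{M}'_2$; composing with the inclusion $\mathcal{M}'_2\hookrightarrow\mathcal{M}_2$ gives a (not necessarily surjective) minion homomorphism $\mathcal{M}_1\to\mathcal{M}_2$. Conversely, given any minion homomorphism $\lambda\colon\mathcal{M}_1\to\mathcal{M}_2$, its image $\lambda(\mathcal{M}_1)$ is a subminion of $\mathcal{M}_2$ (one checks it is closed under taking minors using that $\lambda$ commutes with minor formation and preserves declarations), and $\lambda$ corestricts to a surjective minion homomorphism onto $\lambda(\mathcal{M}_1)$; hence $\lambda(\mathcal{M}_1)\in\mathop{{\sR}{\sP}}\mathcal{M}_1$ and $\mathcal{M}_2\in\sE(\lambda(\mathcal{M}_1))\subseteq\mathop{{\sE}{\sR}{\sP}}\mathcal{M}_1$.

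For part~\ref{main-ERP:fin}, under the finiteness hypothesis I would argue the relational characterization. For \ref{main-ERP:ERPfin}~$\Rightarrow$~\ref{main-ERP:mcc}: if $\mathcal{M}_2\in\mathop{{\sE}{\sR}{\sPfin}}\mathcal{M}_1$, pick $\mathcal{M}'_1\in\sPfin\mathcal{M}_1$, say $\mathcal{M}'_1=\mathcal{M}_1^{\otimes k}$ with $B$-side living over $A^k$, and $\mathcal{M}'_2$ with $\mathcal{M}'_2\in\sR\mathcal{M}'_1$ and $\mathcal{M}'_2\subseteq\mathcal{M}_2$. Writing $Q_{A^k}$ for an invariant set defining $\mathcal{M}_1^{\otimes k}$ (so $\mcr{Q_{A^k}}=\lift{\mcr{Q_A}}$ by Lemma~\ref{lem:flatten}\ref{lem:flatten:5}) and $Q_{B'}:=\mInv\mathcal{M}'_2$, Proposition~\ref{prop:ER} applied on the sorted set $A^k$ gives a reflection $(h,h')$ of $A^k$ into $B$ with $Q_{B'}^{(h,h')}\subseteq\mcr{Q_{A^k}}$. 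Since $\mathcal{M}'_2\subseteq\mathcal{M}_2$ forces $Q_B\subseteq Q_{B'}$, coreflecting yields $Q_B^{(h,h')}\subseteq Q_{B'}^{(h,h')}\subseteq\mcr{Q_{A^k}}$; now applying $\flatten{}$ and using $\mcr{\flatten{\mcr{Q_{A^k}}}}=\mcr{\flatten{\lift{\mcr{Q_A}}}}=\mcr{Q_A}$ (Lemmas~\ref{lem:flatQ} and \ref{lem:flatten}\ref{lem:flatten:2}) gives $\flatten{Q_B^{(h,h')}}\subseteq\mcr{Q_A}$, as desired.

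For \ref{main-ERP:mcc}~$\Rightarrow$~\ref{main-ERP:ERPfin}: given $k$ and the reflection $(h,h')$ of $A^k$ into $B$ with $\flatten{Q_B^{(h,h')}}\subseteq\mcr{Q_A}$, set $Q_{A^k}:=\lift{\mcr{Q_A}}$, which is minor-closed and satisfies $\mPol Q_{A^k}=\mathcal{M}_1^{\otimes k}=:\mathcal{M}'_1$ by Lemma~\ref{lem:flatten}\ref{lem:flatten:5} and Proposition~\ref{prop:power-mc}. Then $Q_B^{(h,h')}=\lift{\flatten{Q_B^{(h,h')}}}\subseteq\lift{\mcr{Q_A}}=\mcr{Q_{A^k}}$ by Lemma~\ref{lem:flatten}\ref{lem:flatten:1}, so the hypothesis \ref{prop:ER:2} of Proposition~\ref{prop:ER} (with $A$ replaced by $A^k$) is met, giving a minion $\mathcal{M}'_2\in\mathop{{\sE}{\sR}}\mathcal{M}'_1$ with invariant set containing $Q_B$, hence $\mathcal{M}_2=\mPol Q_B\supseteq\mathcal{M}'_2$, i.e.\ $\mathcal{M}_2\in\sE\mathcal{M}'_2\subseteq\mathop{{\sE}{\sR}}\mathcal{M}'_1=\mathop{{\sE}{\sR}{\sPfin}}\mathcal{M}_1$ (absorbing the extra $\sE$ using $\mathop{{\sE}{\sE}}=\sE$). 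The main obstacle I anticipate is bookkeeping rather than conceptual: one must be careful that the finiteness hypothesis is needed so that the Galois connection $\mPol$--$\mInv$ is sharp on $A^k$ as well (its components $A_s^k$ are finite), and that Proposition~\ref{prop:ER}, stated for a generic sorted set, is being legitimately reused with $A$ replaced by $A^k$ — which is fine since $A^k$ again has all components finite and $S_B\subseteq S_A=S_{A^k}$.
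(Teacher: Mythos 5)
Your proposal is correct and follows essentially the same route as the paper: part~\ref{main-ERP:arb} reduces to Lemma~\ref{lem:mh-ModmId} by passing to the image of $\lambda$ (your explicit remark that $\range\lambda$ is again a minion is a detail the paper leaves implicit), and part~\ref{main-ERP:fin} combines Proposition~\ref{prop:P} and Lemma~\ref{lem:flatten} with Proposition~\ref{prop:ER} applied over $A^k$, exactly as in the paper. The only difference is cosmetic: in a couple of places you re-derive the forward direction of Proposition~\ref{prop:ER} (via $Q_{B'}=\mInv\mathcal{M}'_2$ and coreflection monotonicity) where the paper invokes it as a black box, which costs a few lines but changes nothing.
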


\begin{proof}
\ref{main-ERP:arb}
Assume $\mathcal{M}_2 \in \mathop{{\sE}{\sR}{\sP}} \mathcal{M}_1$.
Then there exists $\mathcal{M}'_2 \in \mathop{{\sR}{\sP}} \mathcal{M}_1$ such that $\mathcal{M}'_2 \subseteq \mathcal{M}_2$.
By Lemma~\ref{lem:mh-ModmId}, there exists a surjective minion homomorphism $\lambda \colon \mathcal{M}_1 \to \mathcal{M}'_2$.
By extending the codomain of $\lambda$, we get a minion homomorphism from $\mathcal{M}_1$ to $\mathcal{M}_2$.

Assume now that $\lambda \colon \mathcal{M}_1 \to \mathcal{M}_2$ is a minion homomorphism.
Let $\mathcal{M}'_2 := \range \lambda$.
Now $\lambda$ is clearly a surjective minion homomorphism of $\mathcal{M}_1$ into $\mathcal{M}'_2$, so by Lemma~\ref{lem:mh-ModmId}, $\mathcal{M}'_2 \in \mathop{{\sR}{\sP}} \mathcal{M}_1$.
Since $\mathcal{M}'_2 \subseteq \mathcal{M}_2$, we have $\mathcal{M}_2 \in \mathop{{\sE}{\sR}{\sP}} \mathcal{M}_1$.

\ref{main-ERP:fin}
\ref{main-ERP:ERP}~$\Rightarrow$~\ref{main-ERP:mcc}:
Assume $\mathcal{M}_2 \in \mathop{{\sE}{\sR}{\sPfin}} \mathcal{M}_1$.
Then there exists $k \in \IN$ such that $\mathcal{M}_2 \in \mathop{{\sE}{\sR}} \mathcal{M}_1^{\otimes k}$.
By Proposition~\ref{prop:P}, we have $\mathcal{M}_1^{\otimes k} = \mPol Q_{A^k}$ for $Q_{A^k} := \lift{\mcr{Q_A}}$; moreover $\mcr{\flatten{\mcr{Q_{A^k}}}} = \mcr{Q_A}$.
By Proposition~\ref{prop:ER}, there exists a reflection $(h,h')$ of $A^k$ into $B$ such that $Q_B^{(h,h')} \subseteq \mcr{Q_{A^k}}$.
Consequently, $\flatten{(Q_B^{(h,h')})} \subseteq \flatten{\mcr{Q_{A^k}}} \subseteq \mcr{\flatten{\mcr{Q_{A^k}}}} = \mcr{Q_A}$.

\ref{main-ERP:mcc}~$\Rightarrow$~\ref{main-ERP:ERP}:
Assume $k \in \IN_+$ and $(h,h')$ is a reflection of $A^k$ into $B$ satisfying $\flatten{Q_B^{(h,h')}} \subseteq \mcr{Q_A}$.
Let $Q_{A^k} := \lift{\mcr{Q_A}}$; we have $Q_{A^k} = \lift{(\mInv \mathcal{M}_1)} = \mInv \mathcal{M}_1^{\otimes k}$ by Lemma~\ref{lem:flatten}\ref{lem:flatten:5}, so $\mcr{Q_{A^k}} = Q_{A^k}$.
Then $Q_B^{(h,h')} = \lift{(\flatten{Q_B^{(h,h')}})} \subseteq \lift{\mcr{Q_A}} = Q_{A^k} = \mcr{Q_{A^k}}$,
from which it follows by Proposition~\ref{prop:ER} that $\mathcal{M}_2 \in \mathop{{\sE}{\sR}} \mathcal{M}_1^{\otimes k} \subseteq \mathop{{\sE}{\sR}{\sPfin}} \mathcal{M}_1$.
\end{proof}

Under the additional hypothesis that there are only a finite number of sorts, the four conditions of Theorem~\ref{thm:main-RP} become equivalent, and so do those of Theorem~\ref{thm:main-ERP}.
This is a consequence of the following result.

\begin{proposition}
\label{prop:main:finite}
Let $A$ and $B$ be $S$\hyp{}sorted sets.
Assume that $S_{B}$ is finite, all components $A_s$ and
$B_s$ \textup{(}$s \in S_{B}$\textup{)} are finite, and $S_B \subseteq S_A$.
Let $\mathcal{M}_1 \subseteq \Op(A)$ and $\mathcal{M}_2 \subseteq \Op(B)$ be arbitrary sets of operations, not necessarily minions.
Then the following conditions are equivalent.
\begin{enumerate}[label={\textup{(\roman*)}}]
\item\label{prop:main:finite:RPfin} $\mathcal{M}_2 \in \mathop{{\sR}{\sPfin}} \mathcal{M}_1$,
\item\label{prop:main:finite:RP} $\mathcal{M}_2 \in \mathop{{\sR}{\sP}} \mathcal{M}_1$.

\end{enumerate}
\end{proposition}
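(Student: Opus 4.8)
The plan is to prove the equivalence by establishing the only nontrivial implication, namely \ref{prop:main:finite:RP} $\Rightarrow$ \ref{prop:main:finite:RPfin}; the reverse implication is immediate since every finite direct power is a direct power, so $\sPfin \mathcal{M}_1 \subseteq \sP \mathcal{M}_1$ and hence $\mathop{{\sR}{\sPfin}} \mathcal{M}_1 \subseteq \mathop{{\sR}{\sP}} \mathcal{M}_1$. Note first that we may freely replace $\mathcal{M}_1$ and $\mathcal{M}_2$ by the minions they generate: if $\mathcal{M}_2 \in \mathop{{\sR}{\sP}} \mathcal{M}_1$ is witnessed by $\mathcal{M}_2 = (\mathcal{M}_1^{\otimes I})_{(h,h')}$, then since reflection and direct power operators pass to generated minions (cf.\ the algebraic reformulation via $\mathbf{A}_{\mathcal{M}}$ together with Theorem~\ref{thm:Mod-mId-K}), we get $\mcf{\mathcal{M}_2} \in \mathop{{\sR}{\sP}} \mcf{\mathcal{M}_1}$, and conversely a witness for the finite-power statement for the generated minions yields one for $\mathcal{M}_1, \mathcal{M}_2$ themselves. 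So we assume $\mathcal{M}_1 = \mPol \mInv \mathcal{M}_1$ and $\mathcal{M}_2 = \mPol \mInv \mathcal{M}_2$ are minions, and we may invoke Proposition~\ref{prop:ER}, Proposition~\ref{prop:R}, and Theorem~\ref{thm:main-RP}\ref{main-RP:fin}.

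The strategy is then a compactness argument on arities. By Theorem~\ref{thm:main-RP}\ref{main-RP:fin}, condition \ref{prop:main:finite:RPfin} is equivalent to the existence of an integer $k \in \IN_+$ and a reflection $(h,h')$ of $A^k$ into $B$ such that $\flatten{Q_B^{(h,h')}} \subseteq \mcr{Q_A}$ and $(\lift{\mcr{Q_A}})_{(h,h')} \subseteq \mcr{Q_B}$, where we take $Q_A := \mInv \mathcal{M}_1$ and $Q_B := \mInv \mathcal{M}_2$. On the other hand, the direct-power-then-reflection construction with an \emph{infinite} index set $I$ produces, after flattening, a relation pair on $A$ of arity $m \cdot \card{I}$ from an $m$-ary relation pair on $B$; the point is that each \emph{individual} invariant relation pair of $\mathcal{M}_2$ lives on finitely many coordinates, and because $S_B$ and all the sets $B_s$ ($s \in S_B$) are finite, there are only \emph{finitely many} $S$-sorted relation pairs on $B$ of each fixed arity $m$. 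The plan is to use this finiteness to cut the infinite index set $I$ down to a finite one: for each generator (equivalently, for each relation pair in a generating set of $\mInv \mathcal{M}_2$, or since we are in the closed setting, for each relation pair of $\mInv \mathcal{M}_2$ up to the finitely many of each arity) the witnessing computation only involves finitely many coordinates of $A^I$, so one can find a finite $I_0 \subseteq I$ that already witnesses membership. More precisely, I would argue that $\mathcal{M}_2 \in \sR(\mathcal{M}_1^{\otimes I})$ forces, via Proposition~\ref{prop:R} applied with $A^I$ in place of $A$, that $\mInv \mathcal{M}_2^{(h,h')} \subseteq \mcr{\mInv \mathcal{M}_1^{\otimes I}} = \lift[I]{\mcr{\mInv\mathcal{M}_1}}$ (using the infinite-index analogue of Lemma~\ref{lem:flatten}\ref{lem:flatten:5}); then each element of $\mInv \mathcal{M}_2^{(h,h')}$ is a lifting along $I$ of a member of $\mcr{\mInv \mathcal{M}_1}$, which is a relation pair on $A$ whose support (the coordinates of $A$ it genuinely constrains, modulo the diagonal) is finite. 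Collecting these finitely-many-per-arity supports and using that $\mInv \mathcal{M}_2$ has only finitely many members of each arity, one extracts a single finite $I_0$ covering all of them.

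The remaining step is to check that restricting to $I_0$ preserves both inclusions of Theorem~\ref{thm:main-RP}\ref{main-RP:mcc}, i.e.\ that the natural projection $A^I \to A^{I_0}$ induces, together with $(h,h')$, a reflection of $A^{\card{I_0}}$ into $B$ satisfying $\flatten{Q_B^{(h,h')}} \subseteq \mcr{Q_A}$ and $(\lift{\mcr{Q_A}})_{(h,h')} \subseteq \mcr{Q_B}$; this is a routine compatibility check between projections, liftings, and flattenings, essentially because flattening kills the coordinates outside $I_0$ and those coordinates carried no constraint. I expect the main obstacle to be bookkeeping: making the notion of ``support of a relation pair'' precise in the multisorted setting (a relation pair $(R,R')$ is determined, up to relaxation and the diagonal, by a finite set of coordinates exactly when it lies in $\mcr{\cdot}$ of something finite-arity, which here it does), and verifying that the finitely-many-relation-pairs-per-arity bound genuinely lets one choose $I_0$ uniformly rather than separately for each pair — this uses that a minor-closed set on a finite base is itself generated by its bounded-arity part only in a controlled sense, so some care is needed to phrase the compactness correctly. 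Once the finite $I_0$ is in hand, appealing to Theorem~\ref{thm:main-RP}\ref{main-RP:fin} closes the argument.
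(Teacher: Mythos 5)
Your proposal has genuine gaps, and the decisive one sits exactly where you flag ``some care is needed.'' The paper's proof does not go through the relational machinery at all: it works directly on the operations. Given $\mathcal{M}_2 = (\mathcal{M}_1^{\otimes K})_{(h,h')}$ with $K$ infinite, it enumerates all of $B$ in a single tuple $\bd$ (possible because $S_B$ and the $B_s$ are finite), observes that every value $f^{\otimes K}(h_w(\bb))$ is a minor of $f^{\otimes K}$ evaluated at the \emph{fixed} tuple $h_u(\bd)$, and notes that the coordinates $\pr_j(h_u(\bd))$, $j \in K$, take at most $\card{A_u}$ distinct values. This yields one finite $k \subseteq K$, with $\card{k} \leq \card{A_u}$, such that every $K$-coordinate is duplicated inside $k$; hence $f^{\otimes K}(h_w(\bb))$ is determined by its projection to $k$ uniformly in $f$, $w$, and $\bb$, and one can define $\tilde h := \pr_k \circ h$ and a well-defined $\tilde h'$ with $(f^{\otimes k})_{(\tilde h,\tilde h')} = (f^{\otimes K})_{(h,h')}$. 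Your compactness argument, by contrast, produces (at best) a finite index set \emph{per arity}; since there are infinitely many arities, this does not assemble into a single finite $I_0$, and no uniform bound is extracted. That uniformization is the entire content of the proposition.

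Two further structural problems. First, your reduction to minions is not valid for this statement: the proposition is explicitly about arbitrary sets of operations, the operators $\sR$ and $\sPfin$ act elementwise, and $\mcf{\mathcal{M}_2} \in \mathop{{\sR}{\sPfin}}\mcf{\mathcal{M}_1}$ does not imply $\mathcal{M}_2 \in \mathop{{\sR}{\sPfin}}\mathcal{M}_1$. Moreover, the hypotheses here only make $A_s$ finite for $s \in S_B$; the components of $A$ outside $S_B$ (and $S_A$ itself) may be infinite, so Theorem~\ref{thm:mPolmInv-mInvmPol} and hence Proposition~\ref{prop:R} and Theorem~\ref{thm:main-RP}\ref{main-RP:fin} are not available for $A$. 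Second, the ``infinite-index analogue of Lemma~\ref{lem:flatten}\ref{lem:flatten:5}'' you invoke does not exist in this framework: relation pairs have finite arity, so an $m$-ary relation pair on $A^I$ with $I$ infinite has no flattening, and $\lift[I]{}$ is undefined. A correct proof along your relational lines would require rebuilding the Galois theory for infinitary relations; the paper avoids all of this by arguing at the level of operations.
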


\begin{proof}
\ref{prop:main:finite:RPfin}~$\Rightarrow$~\ref{prop:main:finite:RP}: Trivial.

\ref{prop:main:finite:RP}~$\Rightarrow$~\ref{prop:main:finite:RPfin}:
Assume that $\mathcal{M}_2 \in \mathop{{\sR}{\sP}} \mathcal{M}_1$ is a reflection of an infinite power of $\mathcal{M}_1$.
Then there exists a reflection $(h,h')$ from $A^{K}$ (for some infinite set $K$) to $B$ such that $\mathcal{M}_2 = (\mathcal{M}_1^{K})_{(h,h')}$.
We are going to construct a finite subset $k \subseteq K$ and a reflection $(\tilde h,\tilde h')$ from $A^k$ to $B$ such that $(f^{\otimes k})_{(\tilde h,\tilde h')} = (f^{\otimes K})_{(h,h')}$ for each $f \in \mathcal{M}_{1}$; therefore $\mathcal{M}_2 = (\mathcal{M}_1^{\otimes K})_{(h,h')} = (\mathcal{M}_1^{\otimes k})_{(\tilde h,\tilde h')}$, which will finish the proof.
We recall the following notation.
For $f \in \mathcal{M}_1$ with $\declaration(f) = (w,s)$, $w = s_1 \dots s_n$, a multisorted map
$\bar h = (\bar h_{s})_{s \in S} \colon B \to C$, and
$\bb := (b_1, \dots, b_n) \in B_w = B_{s_1} \times \dots \times B_{s_n}$,
let $\bar h_{w}(\bb) := (\bar h_{s_1}(b_1), \dots, \bar h_{s_n}(b_n))$. 
For $\alpha = (a_j)_{j \in K} \in A_s^K$ let $\pr_j(\alpha) := a_j$, $j \in K$, and let $\pr_k(\alpha) := (a_j)_{j \in k}$ be the projection (restriction) onto the coordinates in $k$. 

Now we define $\tilde h := {\pr_k} \circ h$, i.e., $\tilde h_s(b) := \pr_k(h_s(b))$ for $b \in B_s$, $s \in S_B$.
The subset $k$ will be chosen below in such a way that it satisfies
\begin{align}\label{eq:0}
  f^{\otimes k}(\tilde h_w(\bb)) = g^{\otimes k}(\tilde h_v(\bc))
  \implies
  f^{\otimes K}(h_w(\bb)) = g^{\otimes K}(h_v(\bc))
\end{align}
for all $f, g \in \mathcal{M}_1$ with
$\declaration(f) = (w,s)$,
$\declaration(g) = (v,s)$
and all $\bb \in B_w$, $\bc \in B_v$.
Note that $f^{\otimes k}(\tilde h_w(\bb)) = \pr_k(f^{\otimes K}(h_w(\bb)))$.

We define the multisorted map $\tilde h' = (\tilde h'_s)_{s \in S_B} \colon A^k \to B$ as follows:
\begin{align} \label{eq:1}
  \tilde h'_s(\xi) &:=
       \begin{cases}
          h'_s(f^{\otimes K}(h_w(\bb)))
             & \text{if $\xi = f^{\otimes k}(\tilde h_w(\bb))$ for some $f \in \mathcal{M}_1$} \\
             & \text{with $\declaration(f) = (w,s)$ and $\bb \in B_w$,} \\
          \gamma_s
             & \text{otherwise,}
        \end{cases}
\end{align}
where $\gamma_s$ is an arbitrary fixed element of $B_s$, $s \in S_B$.

Because of the property \eqref{eq:0}, $\tilde h'$ is well-defined.
We therefore have
\[
(f^{\otimes k})_{(\tilde h,\tilde h')}(\bb)
= \tilde h'_s(f^{\otimes k}(\tilde h_w(\bb)))
\stackrel{\eqref{eq:1}}{=}
h'_{s}(f^{\otimes K}(h_w(\bb)))
= (f^{\otimes K})_{(h,h')}(\bb),
\]
i.e.,
$(f^{\otimes k})_{(\tilde h,\tilde h')} = (f^{\otimes K})_{(h,h')}$,
which finishes the proof as mentioned above.

It remains to find a subset $k \subseteq K$ with property \eqref{eq:0}.

Assume without loss of generality that $S_B = \{1, \dots, p\}$ for some $p \in \IN_{+}$ and $B_i = \{d_{i1}, d_{i2}, \dots, d_{i n_i}\}$ for each $i \in S_B$.
So $\bd := (d_{11}, \dots, d_{1 n_1}, \dots, d_{p1}, \dots,\linebreak d_{p n_p})$ is a tuple of length $\ell := \sum_{i=1}^{p} n_{i}$ containing all elements of $B$.
For any $(w,s) = (s_1 \dots s_n, i) \in W(S_B) \times S_B$ and $\bb = (b_1, \dots, b_n) \in B_w = B_{s_1} \times \dots \times B_{s_n}$, define the mapping
$\sigma \bb \colon \nset{n} \to \{(i,j) \mid i \in S_B,\, j \in \nset{n_i}\}$ by the rule $i \mapsto (s_i, j)$ if and only if $b_i = d_{s_i j}$.

Then for any $f \in \mathcal{M}_1$ with $\declaration(f) = (w,s)$ and any $\bar h \colon B \to A^L$
(we need here only $L = k$ or $L = K$, and $\bar h = \tilde h$ or $\bar h = h$, resp.) we have 
\begin{align}
  \label{eq:2}
  f^{\otimes L}(\bar h_{w}(\bb)) =
  f^{\otimes L}_{\sigma \bb}(\bar h_u(\bd))
\qquad \text{for $\bb \in B_w$.}
\end{align}
Here $f^{\otimes L}_{\sigma \bb}$ denotes the minor $(f^{\otimes L})_{\sigma \bb}^u$ where the arity $u$ is $1 \dots 1 \dots p \dots p$ (each sort $i$ appears $n_{i}$ times).
With the definition of a minor (Definition~\ref{def:minor}), \eqref{eq:2} can be checked easily.
Note that $\pr_{j}(h_u(\bd)) \in A_u = A_1 \times \dots \times A_1 \times \dots \times A_p \times \dots \times A_p$ (each $A_i$ appears $n_i$ times), i.e., there exist at most $\card{A_u} = \card{A_1^{n_1} \times \dots \times A_p^{n_p}}$ different $\ell$\hyp{}tuples $\pr_j(h_u(\bd))$, $j \in K$, where, in particular, $\card{A_u}$ is finite since all $A_i$, $B_i$, and $S_B$ are finite.
Therefore one can choose a finite subset $k \subseteq K$ (with $\card{k} \leq \card{A_u}$) such that for all $j \in K$ there exists an $i \in k$ such that $\pr_j(h_u(\bd)) = \pr_i(h_u(\bd))$, the latter being $\pr_i(\tilde h_u(\bd))$.
Hence any $f^{\otimes K}_{\sigma \bb}(h_u(\bd))$ is uniquely determined by its projection onto $k$, i.e., by $f^{\otimes k}_{\sigma \bb}(\tilde h_u(\bd))$.
Consequently, \eqref{eq:0} follows immediately from~\eqref{eq:2}.
\end{proof}

\begin{remark}
The wonderland theorem \cite[Theorem~1.3]{wonderland} concerns whether a relational structure can be obtained from another by special relational constructions; this condition is referred to as pp\hyp{}constructibility (see \cite[Definition~3.4, Corollary~3.10]{wonderland}).
A comparison with our Theorem~\ref{thm:main-ERP} suggests the following generalization of the notion of pp\hyp{}constructibility in the multisorted setting.
For relational structures $Q_A \subseteq \Relp(A)$ and $Q_B \subseteq \Relp(B)$, we say that $Q_A$ \emph{mc\hyp{}constructs} $Q_B$, or that $Q_B$ is \emph{mc\hyp{}constructible} from $Q_A$, if there exist an integer $k \in \IN_+$ and a reflection $(h,h')$ of $A^k$ into $B$ such that $\flatten{Q_B^{(h,h')}} \subseteq \mcr{Q_A}$.
(The ``mc'' in ``mc\hyp{}constructible'' connotes ``minor-closed''.)
Thus, condition~\ref{main-ERP:mcc} in Theorem~\ref{thm:main-ERP}\ref{main-ERP:fin} asserts that $Q_B$ is mc\hyp{}constructible from $Q_A$.
\end{remark}


\section*{Acknowledgments}

This work was partially supported by the Funda\c{c}\~ao para a Ci\^encia e a Tecnologia (Portuguese Foundation for Science and Technology) through the project UIDB/00297/2020 (Centro de Matem\'atica e Aplica\c{c}\~oes) and the project \linebreak PTDC/MAT-PUR/31174/2017.


\end{document}